\documentclass[11pt]{amsart}
\usepackage{amssymb,amsmath,amsthm,bbm}

\usepackage{graphics}

\makeatletter
  \def\title@font{\Large\bfseries}
  \let\ltx@maketitle\@maketitle
  \def\@maketitle{\bgroup%
    \let\ltx@title\@title%
    \def\@title{\resizebox{\textwidth}{!}{%
      \mbox{\title@font\ltx@title}%
    }}%
    \ltx@maketitle%
  \egroup}
\makeatother

\usepackage{color}

\newtheorem{theorem}{Theorem}[section]
\newtheorem{lemma}[theorem]{Lemma}
\newtheorem{prop}[theorem]{Proposition}

\theoremstyle{definition}
\newtheorem{definition}[theorem]{Definition}

\newtheorem{remark}[theorem]{Remark}

 \theoremstyle{plain}
\newtheorem*{namedthm}{\namedthmname}
\newcounter{namedthm}

\makeatletter
\newenvironment{named}[1]
  {\def\namedthmname{#1}%
   \refstepcounter{namedthm}%
   \namedthm\def\@currentlabel{#1}}
  {\endnamedthm}
  \makeatother

\newcommand{\RR}{\mathbb{R}}

\newcommand{\Ec}{\mathcal{E}}
\newcommand{\Htheta}{\mathcal{H}_{\theta}}

\newcommand{\ddbar}{\partial \bar{\partial}}

\newcommand{\PSH}{{\rm PSH}}
\newcommand{\setdef}{\; | \; }
\newcommand{\vol}{{\rm Vol}}
\newcommand{\Amp}{{\rm Amp}}
\newcommand{\id}{\mathbbm{1}}

 \numberwithin{equation}{section}

\usepackage{hyperref}
\hypersetup{
    bookmarks=true,         
    unicode=false,         
    pdftoolbar=true,       
    pdfmenubar=true,       
    pdffitwindow=false,    
    pdfstartview={FitH},   
    pdftitle={Lp metric geometry of big and nef cohomology classes},    
    pdfauthor={E. Di Nezza, C.H. Lu},     
    colorlinks=true,       
   linkcolor=black,         
    citecolor=black,        
    filecolor=black,      
    urlcolor=black}          

 \setcounter{tocdepth}{1}

\title{$L^p$ metric geometry of big and nef cohomology classes}

\author{Eleonora Di Nezza}
\address{Eleonora Di Nezza, Institut des Hautes \'Etudes Scientifiques
Universit\'e Paris-Saclay}
\email{dinezza@ihes.fr}
\urladdr{\href{https://sites.google.com/site/edinezza/home}{https://sites.google.com/site/edinezza/home}}

\author{Chinh H. Lu}
\address{Hoang-Chinh Lu, Laboratoire de Math\'ematiques d'Orsay,
 Univ. Paris-Sud,
 CNRS, Universit\'e Paris-Saclay,
  91405 Orsay, France}
\email{hoang-chinh.lu@math.u-psud.fr}
\urladdr{\href{https://www.math.u-psud.fr/~lu/}{https://www.math.u-psud.fr/~lu/}}

\date{\today}
\thanks{The authors are partially supported by the French ANR project GRACK}

\begin{document}

\maketitle

\begin{center}
{\it In honor of L\^e V\u{a}n Thi\^em's centenary} 
\end{center}

\begin{abstract}
Let $(X,\omega)$ be a compact K\"ahler manifold of dimension $n$, and $\theta$ be a closed smooth real $(1,1)$-form representing a big and nef cohomology class. We introduce a metric $d_p, p\geq 1$, on the finite energy space $\mathcal{E}^p(X,\theta)$, making it a complete geodesic metric space. 	
\end{abstract}

\tableofcontents

\section{Introduction}
Finding canonical  (K\"ahler-Einstein, cscK, extremal) metrics  on compact K\"ahler manifolds  is one of the central questions in differential geometry (see \cite{Calabi_1982_Seminar}, \cite{Yau_1978_CPAM}, \cite{Szekelyhidi_2014_book} and the references therein).  Given a K\"ahler metric $\omega$ on a compact K\"ahler manifold $X$, one looks for a K\"ahler potential $\varphi$ such that $\omega_{\varphi}:= \omega+dd^c\varphi$  is ``canonical''. Mabuchi introduced a Riemannian structure on the space of K\"ahler potentials $\mathcal{H}_{\omega}$.  As shown by Chen \cite{Chen_2000_JDG} $\mathcal{H}_{\omega}$ endowed with the Mabuchi $d_2$ distance is a metric space.  Darvas \cite{Darvas_2017_AJM} showed that its metric completion coincides with a finite energy class of plurisubharmonic functions introduced by Guedj and Zeriahi \cite{Guedj_Zeriahi_2007_JFA}.  Other Finsler geometries $d_p$, $p\geq 1$, on $\mathcal{H}_{\omega}$ were studied by Darvas \cite{Darvas_2015_AIM} and they lead to several spectacular results related to a longstanding conjecture on existence  of cscK metrics and properness of K-energy (see \cite{Darvas_Rubinstein_2017_JAMS}, \cite{Berman_Darvas_Lu_2016_Minimizer},  \cite{Chen_Cheng_2017_csckestimates,Chen_Cheng_2017_csckexistence,Chen_Cheng_2017_csckgeneral}). Employing the same technique as in \cite{Darvas_Rubinstein_2017_JAMS} and extending the $L^1$-Finsler structure of  \cite{Darvas_2015_AIM} to big and semipositive classes via a formula relating the Monge-Amp\`ere energy and the $d_1$ distance, Darvas \cite{Darvas_2017_IMRN} established analogous results for singular normal K\"ahler varieties. Motivated by the same geometric applications, the $L^p$ ($p\geq 1$) Finsler geometry in big and semipositive cohomology classes was constructed in \cite{DiNezza_Guedj_2016_CM} via an approximation method.

In this note we extend the main results of \cite{Darvas_2015_AIM,DiNezza_Guedj_2016_CM} to the context of big and nef cohomology classes. Assume that $X$ is a compact K\"ahler manifold of complex dimension $n$ and let $\theta$ be a smooth closed real $(1,1)$ form representing a  big \& nef cohomology class. Fix $p\geq 1$.

\begin{named}{Main Theorem}\label{mainthm}
	The space $\Ec^p(X,\theta)$ endowed with $d_p$ is a complete geodesic metric space.  
\end{named} 

For the definition of $\Ec^p(X,\theta)$, $d_p$ and relevant notions we refer to Section  \ref{sect: preliminaries}. 
When $p=1$  \ref{mainthm} was established in \cite{Darvas_Dinezza_Lu_2018_L1} in the more general case of big cohomology classes using the approach of \cite{Darvas_2017_IMRN}. Here, we use an approximation argument as in \cite{DiNezza_Guedj_2016_CM} with an important modification due to the fact that generally potentials in big  cohomology classes are unbounded. Interestingly, this modification greatly  simplifies the proof of \cite[Theorem A]{DiNezza_Guedj_2016_CM}. 
\medskip

\paragraph{\bf Organization of the note. }  We recall relevant notions in pluripotential theory in big cohomology classes in Section \ref{sect: preliminaries}. The metric space $(\Ec^p,d_p)$ is introduced in Section \ref{section:  distance dp} where we prove \ref{mainthm}. In case $p=1$ we show in Proposition \ref{prop: approximation vs MA} that the distance $d_1$ defined in this note and the one defined in \cite{Darvas_Dinezza_Lu_2018_L1} do coincide.   

\subsection*{Acknowledgements}
We thank Tam\'as Darvas for valuable discussions.

\section{Preliminaries}\label{sect: preliminaries}
Let $(X,\omega)$ be a compact K\"ahler manifold of dimension $n$.  We use the following real differential operators $d= \partial +\bar{\partial}$, $d^c = i(\bar{\partial}-\partial)$, so that $dd^c =2i \ddbar$. We briefly recall known results in pluripotential theory in big cohomology  classes, and refer the reader to \cite{Boucksom_Eyssidieux_Guedj_Zeriahi_2010_AM}, \cite{Berman_Boucksom_Guedj_Zeriahi_2013_IHES}, \cite{Darvas_DiNezza_Lu_2018_CM,Darvas_Dinezza_Lu_2018_APDE,Darvas_Dinezza_Lu_2018_L1,Darvas_Dinezza_Lu_2018_Logconcave} for more details. 
\subsection{Quasi-plurisubharmonic functions}
 A function $u: X \rightarrow \RR \cup \{-\infty\}$ is quasi-plurisubharmonic (or quasi-psh) if it is locally the sum of a psh function and a smooth function. Given a smooth closed real  $(1,1)$-form $\theta$, we let $\PSH(X,\theta)$ denote the set of all integrable quasi-psh functions $u$ such that $\theta_u:= \theta+dd^c u \geq 0$, where the inequality is understood in the sense of currents. A function $u$ is said to have analytic singularities if locally $u=\log \sum_{j=1}^N |f_j|^2 + h$, where the $f_j's$ are holomorphic and $h$ is smooth. 
 
 The De Rham cohomology class $\{\theta\}$ is K\"ahler if it contains a K\"ahler potential, i.e. a function $u\in \PSH(X,\theta)\cap \mathcal{C}^{\infty}(X,\RR)$ such that $\theta+dd^c u>0$. The class $\{\theta\}$ is nef if $\{\theta +\varepsilon \omega\}$ is K\"ahler for all $\varepsilon>0$. It is pseudo-effective if the set $\PSH(X,\theta)$ is non-empty, and big if $\{\theta-\varepsilon \omega\}$ is pseudo-effective for some $\varepsilon>0$.
 The ample locus of $\{\theta\}$, which will be denoted by $\Amp(\theta)$, is the set of all points $x\in X$ such that there exists $\psi \in \PSH(X,\theta-\varepsilon \omega)$ with analytic singularities and smooth in a neighborhood of $x$. It was shown in \cite[Theorem 3.17]{Boucksom_2004_ASENS} that $\{\theta\}$ is K\"ahler iff $\Amp(\theta)=X$. 
 
 Throughout this paper we always assume that $\{\theta\}$ is big and nef.  Typically, there are no bounded functions in  $\PSH(X,\theta)$, but there are plenty of locally bounded functions as we now briefly recall.   By the bigness of $\{\theta\}$ there exists $\psi \in \PSH(X,\theta-\varepsilon \omega)$ for some $\varepsilon>0$. Regularizing $\psi$ (by \cite[Main Theorem 1.1]{Demailly_1992_JAG}) we can find a function $u\in \PSH(X,\theta-\frac{\varepsilon}{2} \omega)$ smooth in a Zariski open set $\Omega$ of $X$.   Roughly speaking, $\theta_u$ locally behaves as a K\"ahler form on $\Omega$. As shown in \cite[Theorem 3.17]{Boucksom_2004_ASENS} $u$ and $\Omega$ can be constructed in such a way that $\Omega$ is the ample locus of $\{\theta\}$.

If $u$ and $v$ are two  $\theta$-psh functions on $X$, then $u$ is said to be \emph{less singular} than $v$ if $v\leq u+C$ for some $C\in \Bbb R$, while they are said to have the \emph{same singularity type} if $u-C \leq v\leq u+C$, for some $C\in \mathbb{R}$. A  $\theta$-psh function $u$ is said to have \emph{minimal singularities} if it is less singular than any other $\theta$-psh function. An example of a $\theta$-psh function with minimal singularities is 
$$V_\theta:=\sup\{ u\in \PSH(X, \theta)  \setdef  u\leq 0\}.$$
For a function $f: X \rightarrow  \mathbb{R}$, we let $f^*$ denote its upper semicontinuous regularization, i.e. 
$$
f^*(x) := \limsup_{X\ni y \to x} f(y).
$$
Given a measurable function $f$ on $X$ we define 
\[
P_{\theta}(f) := \left ( x \mapsto \sup \{u(x) \setdef u \in \PSH(X,\theta), \  u \leq f \}\right)^*.
\]
 We will need the following result of Berman \cite{Berman_2018_MZ}:
\begin{theorem}\label{thm: regularity of psh envelope}
Let $f$ be a continuous function such that $dd^c f\leq C\omega $ on $X$, for some $C>0$. Then $\Delta_{\omega} (P_{\theta}(f))$ is locally bounded on $\Amp(\theta)$, and 
\begin{equation*}
(\theta+dd^c P_{\theta}(f))^n= \mathbf{1}_{\{P_{\theta}(f)=f\}} (\theta+dd^c f)^n.
\end{equation*}
If $\theta$ is moreover K\"ahler then $\Delta_{\omega}(P_{\theta}(f))$ is globally bounded on $X$. 
\end{theorem}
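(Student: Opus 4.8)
Following Berman, the plan is to realise $P_\theta(f)$ as the ``zero temperature'' limit of smooth solutions of complex Monge--Amp\`ere equations with exponential right-hand side, and to transfer a \emph{uniform} second-order estimate for those solutions down to the envelope. The local nature of the conclusion on $\Omega:=\Amp(\theta)$ is dictated by the geometry: as recalled above there is $u\in\PSH(X,\theta-\tfrac{\varepsilon}{2}\omega)$ smooth on $\Omega$, so that $\theta_u$ is a genuine K\"ahler form on each compact $K\Subset\Omega$ but degenerates towards $\partial\Omega$; when $\{\theta\}$ is already K\"ahler one takes $\Omega=X$ and $\theta\ge c\,\omega$ globally, which is precisely why the estimate then holds on all of $X$. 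Note first that, $f$ being continuous, $V_\theta+\min_X f\le P_\theta(f)\le V_\theta+\max_X f$, so $P_\theta(f)$ has minimal singularities, hence is locally bounded on $\Omega$, and the statement is meaningful.

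For $\beta>0$ fix a smooth volume form $\mu$ with $\int_X\mu=\vol(\theta)$ and with bounded, strictly positive density, and solve
\[
(\theta+dd^c u_\beta)^n=e^{\beta(u_\beta-f)}\,\mu
\]
(the Aubin--Yau theorem when $\theta$ is K\"ahler, the Boucksom--Eyssidieux--Guedj--Zeriahi theory in general). The normalised solution $u_\beta$ has minimal singularities and is smooth on $\Omega$ (for smooth $f$; the general case follows by approximating $f$ from above by smooth functions with the same bound $dd^c\cdot\le C\omega$ up to an $o(\omega)$ term). A comparison argument — comparing $u_\beta$ with $f$ on one side and with $\theta$-psh competitors $v\le f$ on the other, via the comparison and domination principles — shows that $u_\beta\to P_\theta(f)$ uniformly on $X$ as $\beta\to+\infty$.

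The heart of the proof is a Laplacian bound for $u_\beta$ that is independent of $\beta$. Setting $\phi:=u_\beta-u$, the equation reads $(\theta_u+dd^c\phi)^n=e^{F}\theta_u^n$ with $F=\beta(u_\beta-f)+h$ and $h$ smooth on $\Omega$, and one applies the Laplacian of the metric $\theta_{u_\beta}$ to $\log\operatorname{tr}_{\theta_u}\theta_{u_\beta}-A\phi$. The Aubin--Yau / Chern--Lu inequality produces curvature terms of $\theta_u$, bounded on $K$, together with $\Delta_{\theta_u}F/\operatorname{tr}_{\theta_u}\theta_{u_\beta}$. Here $\beta\,\Delta_{\theta_u}u_\beta=\beta\bigl(\operatorname{tr}_{\theta_u}\theta_{u_\beta}-\operatorname{tr}_{\theta_u}\theta\bigr)$ feeds back a \emph{favourable} $+\beta$ after dividing by $\operatorname{tr}_{\theta_u}\theta_{u_\beta}$, while the remaining piece $-\beta\,\Delta_{\theta_u}f$ is bounded \emph{below} precisely because the hypothesis $dd^c f\le C\omega$ forces $\Delta_{\theta_u}f\le C'$ on $K$; thus the $\beta$-dependence is harmless. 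Combining this with the $C^0$ bound on $u_\beta$ and a barrier built from $u$ — which tends to $-\infty$ along $\partial\Omega$, so that the maximum of $\log\operatorname{tr}_{\theta_u}\theta_{u_\beta}-A\phi$ is attained in the interior of $\Omega$ — the usual maximum-principle argument gives $\sup_K\Delta_\omega u_\beta\le C_K$ with $C_K$ independent of $\beta$ (and $\sup_X\Delta_\omega u_\beta\le C$ when $\theta$ is K\"ahler). I expect this uniform estimate to be the main obstacle: one must control at once the parameter $\beta$, the unboundedness of the potentials near $\partial\Omega$, and the mere continuity of $f$.

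Letting $\beta\to+\infty$, the uniform Laplacian bound upgrades the $C^0$ convergence to $C^{1,\alpha}_{\mathrm{loc}}(\Omega)$ and yields $\Delta_\omega P_\theta(f)\le C_K$ on each $K\Subset\Omega$, hence $dd^c P_\theta(f)\in L^\infty_{\mathrm{loc}}(\Omega)$; moreover the complex Monge--Amp\`ere operator is continuous along this convergence, so $e^{\beta(u_\beta-f)}\mu\rightharpoonup(\theta+dd^c P_\theta(f))^n$. On $\{P_\theta(f)<f\}$ one has $u_\beta-f<-\delta$ locally for $\beta$ large, so the right-hand side tends to $0$ there and $(\theta+dd^c P_\theta(f))^n=0$. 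On the contact set $\{P_\theta(f)=f\}$, the function $P_\theta(f)$ has locally bounded Laplacian and touches $f$ from below, so $dd^c P_\theta(f)=dd^c f$ almost everywhere there (here again $dd^c f\le C\omega$ enters). Combining the two, and using that $X\setminus\Omega$ is pluripolar and carries no non-pluripolar Monge--Amp\`ere mass, we obtain $(\theta+dd^c P_\theta(f))^n=\mathbf{1}_{\{P_\theta(f)=f\}}(\theta+dd^c f)^n$ on $X$.
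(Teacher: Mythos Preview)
The paper does not give a proof of this theorem: it is stated as a result of Berman \cite{Berman_2018_MZ} and then used as a black box throughout (in Lemma~\ref{lem: density converge}, Theorem~\ref{thm: definition of dp}, etc.). So there is no ``paper's own proof'' to compare against.

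Your proposal is a faithful outline of Berman's zero-temperature limit argument from the cited reference: approximate $P_\theta(f)$ by the smooth solutions $u_\beta$ of $(\theta+dd^c u_\beta)^n=e^{\beta(u_\beta-f)}\mu$, establish a $\beta$-independent Laplacian bound via the Aubin--Yau inequality and a barrier with analytic singularities, and pass to the limit. Your identification of the key mechanism --- that $\beta\,\Delta_{\theta_u}u_\beta/\operatorname{tr}_{\theta_u}\theta_{u_\beta}$ contributes a favourable $+\beta$ while the upper bound $dd^c f\le C\omega$ makes $\beta\,\Delta_{\theta_u}f/\operatorname{tr}_{\theta_u}\theta_{u_\beta}$ negligible at an interior maximum where the trace is large --- is correct and is exactly why only a one-sided Hessian bound on $f$ is needed. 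The passage from smooth $f$ to merely continuous $f$ with $dd^c f\le C\omega$, and the use of the barrier $u$ to localise the maximum inside $\Amp(\theta)$, are also handled as in Berman. In short: your sketch is sound and matches the source the paper cites, but the paper itself offers nothing to compare it with.
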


If $f=\min(u,v)$ for $u,v$ quasi-psh then there is no need to take the upper semicontinuous regularization in the definition of $P(u,v):= P_{\theta}(\min(u,v))$. The latter is the largest $\theta$-psh function lying below both $u$ and $v$, and is called the rooftop envelope of $u$ and $v$ in \cite{Darvas_Rubinstein_2016_JMSJ}.

\subsection{Non-pluripolar Monge-Amp\`ere products}

Given  $u_1,...,u_p$ $\theta$-psh functions with minimal singularities, $\theta_{u_1}\wedge ... \wedge \theta_{u_p}$, as defined by Bedford and Taylor \cite{Bedford_Taylor_1976_IM,Bedford_Taylor_1982_AM} is  a closed  positive current in $\Amp(\theta)$.   For general $u_1,...,u_p \in \PSH(X,\theta)$, it was shown in \cite{Boucksom_Eyssidieux_Guedj_Zeriahi_2010_AM} that the \emph{non-pluripolar product} of $\theta_{u_1},\ldots,\theta_{u_p}$, that we still denote by
$$
\theta_{u_1}\wedge \ldots \wedge \theta_{u_p},
$$
is well-defined as  a  closed positive $(p,p)$-current on $X$ which does not charge pluripolar sets. For a $\theta$-psh function $u$, the \emph{non-pluripolar complex Monge-Amp{\`e}re measure} of $u$ is simply $\theta_u^n:=\theta_u\wedge \ldots\wedge \theta_u$.  

If $u$ has minimal singularities then $\int_X \theta_u^n$, the total mass of $\theta_u^n$, is equal to $\int_X \theta_{V_{\theta}}^n$,  the volume of the class $\{\theta\}$ denoted by $\vol(\theta)$. For a general $u\in \PSH(X,\theta)$,  $\int_X \theta_u^n$ may take any value in $[0,\vol(\theta)]$. Note that $\vol(\theta)$ is a cohomological quantity, i.e. it does not depend on the smooth representative we choose in $\{\theta\}$. 
\subsection{The energy classes} From now on, we fix $p\geq 1$.

Recall that for any $\theta$-psh function $u$ we have $\int_X \theta_u^n \leq \vol(\theta)$. 
We denote by $\mathcal{E}(X, \theta)$ the set of $\theta$-psh functions $u$ such that $\int_X \theta_u^n = \vol(\theta)$. We let $\Ec^p(X,\theta)$ denote the set of $u\in \Ec(X,\theta)$ such that $\int_X |u-V_{\theta}|^p \theta_u^n<+\infty$. 
For $u,v \in \mathcal{E}^p(X, \theta)$ we define
$$
I_p(u,v):=I_{p,\theta}(u,v):=\int_X |u-v|^p \left( \theta_{u}^n+\theta_{v}^n\right).
$$
It was proved in \cite[Theorem 1.6]{Guedj_Lu_Zeriahi_2017_JDG} that $I_p$ satisfies a quasi triangle inequality:
$$
I_{p,\theta}(u,v) \leq C(n,p) (I_{p,\theta}(u,w)+ I_{p,\theta}(v,w)), \ \forall u,v,w\in \Ec^p(X,\theta). 
$$
In particular, applying this for $w=V_{\theta}$ and using Theorem \ref{thm: regularity of psh envelope} we obtain $I_{p,\theta}(u,v)<+\infty$, for all $u,v\in \Ec^p(X,\theta)$. 
Moreover, it follows from the domination principle  \cite[Proposition 2.4]{Darvas_DiNezza_Lu_2018_CM} that $I_p$ is non-degenerate:
$$
I_{p,\theta}(u,v)=0 \Longrightarrow u=v.
$$

\subsection{Weak geodesics}\label{sec geo}
Geodesic segments connecting K\"ahler potentials  were first introduced by Mabuchi \cite{Mabuchi_1987_OJM}. Semmes \cite{Semmes_1992_AJM} and Donaldson \cite{Donaldson_1999_bookAMS} independently realized that the geodesic equation can be reformulated as a degenerate homogeneous complex Monge-Amp\`ere equation.  The best regularity of a geodesic segment connecting two K\"ahler potentials is known to be $\mathcal{C}^{1,1}$ (see \cite{Chen_2000_JDG}, \cite{Blocki_2012_bookIP}, \cite{Chu_Tosatti_Weinkove_2017_AoPDE}).

In the context of a big cohomology class, the regularity of geodesics is very delicate. To avoid this issue  we follow an idea of  Berndtsson \cite{Berndtsson_2015_IM} considering geodesics as the upper envelope of subgeodesics  (see \cite{Darvas_DiNezza_Lu_2018_CM}). 

For a curve $[0,1] \ni t \mapsto u_t \in \PSH(X,\theta)$  we define 
$$
X\times D \ni (x,z) \mapsto U(x,z) := u_{\log |z|}(x),	
$$
where $D:= \{z\in \mathbb{C} \setdef 1< |z|<e \}$. We let $\pi: X\times D \rightarrow X$ be the projection on $X$. 
\begin{definition}
	We say that $t\mapsto u_t$ is a subgeodesic if $(x,z) \mapsto U(x,z)$ is a $\pi^{*}\theta$-psh function on $X\times D$.
\end{definition}

\begin{definition}
	For  $\varphi_0,\varphi_1 \in \PSH(X,\theta)$, we let $\mathcal{S}_{[0,1]}(\varphi_0,\varphi_1)$ denote the set of  all subgeodesics $[0,1] \ni t \mapsto u_t$ such that $\limsup_{t\to 0} u_t\leq \varphi_0$ and $\limsup_{t\to 1} u_t\leq \varphi_1$. 
\end{definition}

Let $\varphi_0,\varphi_1 \in \PSH(X,\theta)$.  We define, for $(x,z)\in X\times D$, 
$$
\Phi(x,z) :=  \sup \{ U(x,z) \setdef U \in \mathcal{S}_{[0,1]}(\varphi_0,\varphi_1) \}. 
$$
The curve $t\mapsto \varphi_t$ constructed from $\Phi$ via \eqref{eq: complexified curve} is called the weak Mabuchi geodesic connecting $\varphi_0$ and $\varphi_1$.  

Geodesic segments connecting two general $\theta$-psh functions may not exist. If $\varphi_0, \varphi_1 \in \mathcal{E}^p(X,\theta)$, it was shown in \cite[Theorem 2.13]{Darvas_DiNezza_Lu_2018_CM} that $P(\varphi_0,\varphi_1) \in \mathcal{E}^p(X,\theta)$. Since $P(\varphi_0,\varphi_1) \leq \varphi_t$, we obtain that $t \to \varphi_t$ is a curve in $\mathcal{E}^p(X,\theta)$. Each subgeodesic segment is in particular convex in $t$:  
\[
\varphi_t\leq \left (1-t\right )\varphi_0 + t\varphi_1, \ \forall t\in [0,1]. 
\]
Consequently the upper semicontinuous regularization (with respect to both variables $x,z$) of $\Phi$ is again in $\mathcal{S}_{[0,1]}(\varphi_0,\varphi_1)$, hence so is $\Phi$.  In particular, if $\varphi_0,\varphi_1$ have minimal singula\-ri\-ties then the geodesic $\varphi_t$ is Lipschitz on $[0,1]$ (see \cite[Lemma 3.1]{Darvas_DiNezza_Lu_2018_CM}): 
$$
|\varphi_t-\varphi_s| \leq |t-s| \sup_{X} |\varphi_0-\varphi_1|, \ \forall t,s \in [0,1]. 
$$

\subsection{Finsler geometry in the K\"ahler case}
 Darvas \cite{Darvas_2015_AIM} introduced a family of distances in the space of K\"ahler potentials 
$$\mathcal{H}_\omega:= \{\varphi\in \mathcal{C}^\infty (X, \mathbb{R}) \setdef \omega_\varphi>0\}.
$$
\begin{definition}
Let $\varphi_0,\varphi_1\in \mathcal{H}_\omega$. For $p \geq 1$, we set
$$
d_p(\varphi_0,\varphi_1):=\inf \{ \ell_p(\psi) \, | \, \psi
\text{ is a smooth path  joining } \varphi_0 \text{ to } \varphi_1 \},
$$
where 
$
\ell_p(\psi):=\int_0^1 \left( \frac{1}{V} \int_X | \dot{\psi}_t|^p \omega_{\psi_t}^n\right)^{1/p} dt
$
and $V:= \vol(\omega)=\int_X \omega^n$.
\end{definition}
It was then proved in \cite[Theorem 1]{Darvas_2015_AIM} (generalizing Chen's original arguments \cite{Chen_2000_JDG}) that $d_p$ defines a distance on $\mathcal{H}_{\omega}$, and for all $\varphi_0, \varphi_1\in \mathcal{H}_\omega$,
\begin{equation}\label{eq: dp formula}
d_p(\varphi_0,\varphi_1)= \left(\frac{1}{V}\int_ X |\dot{\varphi_t}|^p \omega_{\varphi_t}^n \right)^{1/p},  \quad \forall t\in [0,1],
\end{equation}
where $t\rightarrow \varphi_t$ is the Mabuchi geodesic (defined in Section \ref{sec geo}). It was shown in \cite[Lemma 4.11]{Darvas_2015_AIM} that \eqref{eq: dp formula} still holds for $\varphi_0, \varphi_1\in \PSH(X,\omega)$ with $dd^c \varphi_i\leq C\omega, i=0,1$, for some positive constant $C$.

By \cite{Demailly_1992_JAG,Blocki_Kolodziej_2007_PAMS}, potentials in $\Ec^p(X,\omega)$ can be approximated from above by smooth K\"ahler potentials. As shown in \cite{Darvas_2017_AJM} the metric $d_p$ can be extended for potentials in $\varphi_0, \varphi_1\in\mathcal{E}^p(X, \omega)$: if $\varphi_i^k $  are smooth strictly $\omega$-psh functions decreasing to $\varphi_i$, $i=0,1$ then  the limit 
$$d_p(\varphi_0, \varphi_1):=  \lim_{k\rightarrow +\infty} d_p(\varphi_0^k, \varphi_1^k)$$
exists and it is independent of the approximants. By \cite[Lemma 4.4 and 4.5]{Darvas_2015_AIM}, $d_p$ defines a metric on $\mathcal{E}^p(X,\omega)$ and  $(\Ec^p(X,\omega),d_p)$ is a complete geodesic metric space. 

\section{The metric space $(\mathcal{E}^p(X,\theta),d_p)$}\label{section:  distance dp}
The goal of this section is to define a distance $d_p$ on $\mathcal{E}^p(X, \theta)$ and prove that the space $(\Ec^p(X,\theta), d_p)$ is a complete geodesic metric space.  We follow the strategy in \cite{DiNezza_Guedj_2016_CM}, approximating the space of ``K\"ahler potentials'' $\mathcal{H}_{\theta}$ by regular spaces $\mathcal{H}_{\omega_{\varepsilon}}$, where  $\omega_{\varepsilon}:= \theta +\varepsilon \omega$ represents K\"ahler cohomology classes for any $\varepsilon>0$ (by nefness of $\theta$). 
Note that $\omega_\varepsilon$ is not necessarily a K\"ahler form but there exists a smooth potential $f_\varepsilon\in \mathcal{C}^\infty(X,\ \mathbb{R})$ such that $\omega_\varepsilon+dd^c f_\varepsilon$ is a K\"ahler form.  For notational convenience we normalize $\theta$ so that $\vol(\theta)= \int_X \theta_{V_{\theta}}^n=1$ and we set $V_{\varepsilon}:= \vol(\omega_{\varepsilon})$. 

Typically there is no smooth potentials in $\PSH(X,\theta)$ but the following class contains plenty of potentials sufficiently regular for our purposes:
$$\mathcal{H}_\theta:=\{ \varphi\in \PSH(X, \theta) \setdef  \varphi=P_\theta(f), \; f\in \mathcal{C}(X, \mathbb{R}), \; dd^c f\leq C(f) \omega \}.$$
Here $C(f)$ denotes a positive constant which depends also on $f$. Note that any $u=P_{\theta}(f)\in \mathcal{H}_{\theta}$ has minimal singularities because, for some constant $C>0$, $V_{\theta}-C$ is a candidate defining $P_{\theta}(f)$. The following elementary observation will be useful in the sequel. 
\begin{lemma}\label{lem: min}
If $u,v\in \Htheta$ then $P_{\theta}(u,v)\in \Htheta$. 
\end{lemma}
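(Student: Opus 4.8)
The plan is to produce, from the data defining $u$ and $v$, an explicit continuous function whose envelope is $P_\theta(u,v)$. Write $u=P_\theta(f)$ and $v=P_\theta(g)$ with $f,g\in \mathcal{C}(X,\RR)$, $dd^c f\leq C_1\omega$ and $dd^c g\leq C_2\omega$, and set $h:=\min(f,g)$ and $C:=\max(C_1,C_2)$. The claim will follow once we check that $h\in\mathcal{C}(X,\RR)$ with $dd^c h\leq C\omega$, and that $P_\theta(\min(u,v))=P_\theta(h)$; the latter shows $P_\theta(u,v)\in\Htheta$.

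First I would verify the curvature bound on $h$. The condition $dd^c f\leq C_1\omega$ means exactly that $-f$ is $C_1\omega$-psh, so both $-f$ and $-g$ are $C\omega$-psh; hence $\max(-f,-g)$ is $C\omega$-psh, i.e. $dd^c\max(-f,-g)\geq -C\omega$, which rephrases as $dd^c h=-dd^c\max(-f,-g)\leq C\omega$. Continuity of $h$ is immediate, and by Berman's Theorem~\ref{thm: regularity of psh envelope} the envelope $P_\theta(h)$ is then a genuine $\theta$-psh function (with $\Delta_\omega P_\theta(h)$ locally bounded on $\Amp(\theta)$). So $P_\theta(h)\in\Htheta$ by definition.

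It remains to identify $P_\theta(\min(u,v))$ with $P_\theta(h)$, which I would do by two inequalities. Since $u\le f$ and $v\le g$ we have $\min(u,v)\le\min(f,g)=h$, hence $P_\theta(\min(u,v))\le P_\theta(h)$. Conversely, $P_\theta(h)\le h\le f$ and $P_\theta(h)$ is $\theta$-psh, so by maximality of $P_\theta(f)$ among $\theta$-psh functions dominated by $f$ we get $P_\theta(h)\le P_\theta(f)=u$; symmetrically $P_\theta(h)\le v$; thus $P_\theta(h)\le\min(u,v)$, and being $\theta$-psh it lies below $P_\theta(\min(u,v))$. Combining the two gives $P_\theta(u,v)=P_\theta(\min(u,v))=P_\theta(h)\in\Htheta$. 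There is no serious obstacle here; the only point needing care is the elementary but crucial observation that $dd^c f\le C\omega$ is equivalent to $-f$ being $C\omega$-psh, so that the pointwise minimum $h$ stays in the admissible class, and the use of Theorem~\ref{thm: regularity of psh envelope} to guarantee $P_\theta(h)$ is well defined as a $\theta$-psh function.
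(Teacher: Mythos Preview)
Your proof is correct and follows the same approach as the paper: set $h=\min(f,g)$ and use that $-h=\max(-f,-g)$ is $C\omega$-psh to conclude $dd^c h\le C\omega$. You additionally spell out the identity $P_\theta(u,v)=P_\theta(h)$, which the paper's terse proof leaves implicit (and only invokes later, in the Pythagorean formula); your appeal to Theorem~\ref{thm: regularity of psh envelope} is unnecessary here, since $P_\theta(h)\in\PSH(X,\theta)$ follows directly from the definition of the envelope, but it is harmless.
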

\begin{proof}
Set $h= \min(f,g)\in \mathcal{C}^0(X,\RR)$, where $f,g\in \mathcal{C}^0(X,\RR)$ are such that $u=P_{\theta}(f)$ and $v=P_{\theta}(g)$ and $dd^c f\leq C\omega$, $dd^c g\leq C\omega$. Then $-h=\max(-f,-g)$ is a $C\omega$-psh function on $X$, hence $dd^c (-h) + C\omega\geq 0$.
\end{proof}

\subsection{Defining a distance $d_p$ on $\mathcal{H}_{\theta}$}
By Darvas \cite{Darvas_2015_AIM}, the Mabuchi distance $d_{p, \omega}$ is well defined on $\mathcal{E}^p(X, \omega)$ when the reference form $\omega$ is a K\"ahler form. With the following observation we show that such a distance behaves well when we change the K\"ahler representative in  $\{\omega\}$.

\begin{prop}\label{prop: dp for kahler classes}
Let $\omega_f:=\omega+dd^c f\in \{\omega\}$ be another K\"ahler form. Then, given $\varphi_0, \varphi_1\in \mathcal{E}^p(X, \omega)$ we have
$$d_{p, \omega} (\varphi_0, \varphi_1)= d_{p,\omega_f}(\varphi_0-f, \varphi_1-f).$$
\end{prop}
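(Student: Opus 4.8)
The plan is to reduce the statement to a bijection between Mabuchi geodesics in the two Finsler structures, exploiting the fact that translation by a fixed smooth function is an isometry at the level of the path-length functional. First I would observe that the map $\Psi \mapsto \Psi - f$ sends $\PSH(X,\omega)$ bijectively onto $\PSH(X,\omega_f)$, since $\omega + dd^c\Psi = \omega_f + dd^c(\Psi - f)$; moreover this identity shows $(\omega_\Psi)^n = (\omega_{f, \Psi - f})^n$ as measures on $X$, and in particular $\vol(\omega) = \vol(\omega_f) =: V$. Consequently the map also sends $\mathcal{E}^p(X,\omega)$ onto $\mathcal{E}^p(X,\omega_f)$, so the right-hand side makes sense.

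Next I would check the claim on the level of smooth paths between smooth potentials. If $t \mapsto \psi_t$ is a smooth path in $\mathcal{H}_\omega$ from $\varphi_0$ to $\varphi_1$, then $t \mapsto \psi_t - f$ is a smooth path in $\mathcal{H}_{\omega_f}$ from $\varphi_0 - f$ to $\varphi_1 - f$, and conversely. Since $f$ is independent of $t$, the velocities satisfy $\dot{\psi}_t = \overline{(\psi_t - f)}^{\,\boldsymbol{\cdot}}$ pointwise, i.e. the time-derivatives literally agree; combined with the measure identity $(\omega_{\psi_t})^n = (\omega_{f, \psi_t - f})^n$ we get
$$
\ell_{p,\omega}(\psi_\bullet) = \int_0^1 \left(\frac{1}{V}\int_X |\dot{\psi}_t|^p\, \omega_{\psi_t}^n\right)^{1/p} dt = \int_0^1 \left(\frac{1}{V}\int_X |\overline{(\psi_t - f)}^{\,\boldsymbol{\cdot}}|^p\, \omega_{f,\psi_t - f}^n\right)^{1/p} dt = \ell_{p,\omega_f}((\psi - f)_\bullet).
$$
Taking the infimum over all such paths and using that the correspondence $\psi_\bullet \leftrightarrow (\psi - f)_\bullet$ is a bijection between the two sets of admissible paths gives $d_{p,\omega}(\varphi_0,\varphi_1) = d_{p,\omega_f}(\varphi_0 - f, \varphi_1 - f)$ for $\varphi_0, \varphi_1 \in \mathcal{H}_\omega$.

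Finally I would pass to $\mathcal{E}^p$ by the approximation definition of $d_p$ recalled in the excerpt. Given $\varphi_i \in \mathcal{E}^p(X,\omega)$, choose smooth $\omega$-psh potentials $\varphi_i^k \searrow \varphi_i$; then $\varphi_i^k - f$ are smooth $\omega_f$-psh potentials decreasing to $\varphi_i - f$, so they are legitimate approximants for the $d_{p,\omega_f}$-side. Applying the already-established equality at each level $k$ and letting $k \to \infty$ yields the result, the limits existing and being approximant-independent by the theory of \cite{Darvas_2017_AJM}. The only point requiring a little care—and the closest thing to an obstacle—is verifying that the weak geodesic (rather than an arbitrary path) transforms correctly, should one prefer to prove the identity via formula \eqref{eq: dp formula} instead of via the infimum: there one notes that $U(x,z) = \psi_{\log|z|}(x)$ is $\pi^*\omega$-psh on $X \times D$ iff $U(x,z) - f(x)$ is $\pi^*\omega_f$-psh, so the subgeodesic envelopes correspond and the Mabuchi geodesic for $(\varphi_0,\varphi_1)$ maps to the one for $(\varphi_0 - f, \varphi_1 - f)$; then \eqref{eq: dp formula} applied on both sides gives the equality directly. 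Either route is routine; I would present the infimum argument as the cleanest.
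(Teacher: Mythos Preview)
Your proof is correct. The overall architecture---establish the identity for smooth potentials, then pass to $\mathcal{E}^p$ by approximation---matches the paper, but your treatment of the smooth case differs slightly. The paper works directly with the Mabuchi geodesic: it first proves that if $\varphi_t$ is the weak geodesic joining $\varphi_0,\varphi_1$ with respect to $\omega$, then $\varphi_t-f$ is the weak geodesic joining $\varphi_0-f,\varphi_1-f$ with respect to $\omega_f$ (this is your ``alternative route'' at the end), and then invokes formula \eqref{eq: dp formula} on each side. Your primary argument instead exploits the bijection on \emph{all} smooth paths and the invariance of the length functional $\ell_p$, taking the infimum without ever identifying the minimizer. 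This is arguably cleaner, since it uses only the definition of $d_p$ as a path-length metric and avoids appealing to the nontrivial fact that \eqref{eq: dp formula} holds along the weak geodesic. The paper's route, on the other hand, yields the geodesic correspondence $\varphi_t^f=\varphi_t-f$ as a byproduct, which is of independent interest. Either way the approximation step is identical.
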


\begin{proof}
Let $\varphi_t$ be the Mabuchi geodesic (w.r.t $\omega$) joining $\varphi_0$ and $\varphi_1$ and let $\varphi_t^f$ be the Mabuchi geodesic (w.r.t $\omega_f$) joining $\varphi_0-f$ and $\varphi_1-f$. We claim that $\varphi_t^f= \varphi_t-f$. Indeed,  $\varphi_t-f$ is an $\omega_f$-subgeodesic connecting $\varphi_0-f$ and $\varphi_1-f$. Hence $\varphi_t-f\leq \varphi_t^f$. On the other hand $\varphi_t^f +f$ is a candidate defining $\varphi_t$, thus $\varphi_t^f +f \leq \varphi_t$, proving the claim.

Assume $\varphi_0, \varphi_1$ are K\"ahler potentials. By \eqref{eq: dp formula} we have
\begin{flalign*}
Vd_{p, \omega}^p (\varphi_0, \varphi_1)&= \int_X |\dot{\varphi_0}|^p (\omega+dd^c \varphi_0)^n\\
&= \int_X \left|\lim_{t\rightarrow 0^+}  \frac{(\varphi_t-f) -(\varphi_0-f)}{t} \right|^p \, \left(\omega_f+dd^c (\varphi_0-f)\right)^n\\
&= \int_X |\dot{\varphi^f_0}|^p (\omega_f+dd^c (\varphi_0-f))^n\\
&= Vd^p_{p,\omega_f}(\varphi_0-f, \varphi_1-f).
\end{flalign*}
The identity for potentials in $\Ec^p(X,\omega)$ follows from the fact that the distance $d_{p, \omega}$ between potentials $\varphi_0, \varphi_1\in \mathcal{E}^p(X, \omega)$ is defined as the limit $\lim_j d_{p, \omega}(\varphi_{0,j}, \varphi_{1,j})$, where $\{\varphi_{i,j}\}$ is a sequence of smooth strictly $\omega$-psh functions decreasing to $\varphi_i$, for $i=0,1$.
\end{proof}
Thanks to the above Proposition we can then define the Mabuchi distance w.r.t any smooth $(1,1)$-form $\eta$ in the K\"ahler class $\{\omega\}$:
\begin{equation}\label{distance form}
d_{p,\eta}(\varphi_0, \varphi_1):= d_{p, \eta_f} (\varphi_0-f, \varphi_1-f),\qquad \varphi_0, \varphi_1\in \mathcal{E}^p(X,\eta)
\end{equation}
where $\eta_f = \eta +dd^c f$ is a K\"ahler form. Proposition \ref{prop: dp for kahler classes} reveals that the definition is  independent of the choice of $f$. 

We next extend the Pythagorean formula of \cite{Darvas_2015_AIM,Darvas_2017_AJM} for K\"ahler classes. 
\begin{lemma}\label{lem: Pythagorean Darvas}
If $\{\eta\}$ is K\"ahler and $u,v\in \Ec^p(X,\eta)$ then 
$$
d_{p,\eta}^p(u,v) = d_{p,\eta}^p(u,P_{\eta}(u,v)) + d_{p,\eta}^p(v,P_{\eta}(u,v)). 
$$
\end{lemma}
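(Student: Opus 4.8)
The plan is to reduce the statement to the known Pythagorean formula of Darvas for genuine K\"ahler forms, using the change-of-representative formula \eqref{distance form} together with the fact that the rooftop envelope commutes with subtracting a smooth potential. First I would choose a smooth $f\in\mathcal{C}^\infty(X,\RR)$ with $\eta_f:=\eta+dd^c f$ a K\"ahler form, which exists since $\{\eta\}$ is K\"ahler. The elementary but essential observation is that for $u,v\in\PSH(X,\eta)$ one has
$$
P_{\eta_f}(u-f,v-f) = P_{\eta}(u,v) - f,
$$
which follows because $w\mapsto w+f$ is a bijection between $\eta_f$-psh functions and $\eta$-psh functions that preserves the order relation and the constraints $w\le u-f$, $w\le v-f$; thus the extremal function on the left is obtained from the one on the right by subtracting $f$. (No upper semicontinuous regularization is needed here since we are taking the envelope of a minimum of two quasi-psh functions, as recalled in the excerpt.)

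Next I would apply the definition \eqref{distance form} three times:
$$
d_{p,\eta}^p(u,v) = d_{p,\eta_f}^p(u-f,v-f),\quad
d_{p,\eta}^p(u,P_{\eta}(u,v)) = d_{p,\eta_f}^p(u-f, P_{\eta}(u,v)-f),
$$
and likewise for the $v$-term, and then rewrite $P_{\eta}(u,v)-f$ as $P_{\eta_f}(u-f,v-f)$ using the displayed identity above. At this point the desired equality becomes exactly the Pythagorean formula
$$
d_{p,\eta_f}^p(u-f,v-f) = d_{p,\eta_f}^p\big(u-f, P_{\eta_f}(u-f,v-f)\big) + d_{p,\eta_f}^p\big(v-f, P_{\eta_f}(u-f,v-f)\big)
$$
for the honest K\"ahler form $\eta_f$ and the potentials $u-f, v-f\in\mathcal{E}^p(X,\eta_f)$, which is \cite[Theorem 1]{Darvas_2015_AIM} (extended to $\mathcal{E}^p$ in \cite{Darvas_2017_AJM}). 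One should also check that $u-f, v-f$ genuinely lie in $\mathcal{E}^p(X,\eta_f)$: since $\eta_f$-psh functions are precisely $f$-translates of $\eta$-psh functions, the non-pluripolar Monge-Amp\`ere measures satisfy $(\eta_f)_{u-f}^n = \eta_u^n$, so both the full-mass condition and the finite-$p$-energy condition are unchanged.

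The only step requiring any care — the "main obstacle," though it is mild — is verifying the envelope identity $P_{\eta_f}(u-f,v-f)=P_{\eta}(u,v)-f$ rigorously, making sure the correspondence between $\eta$-psh and $\eta_f$-psh functions is exactly an additive shift by $f$ (immediate from $dd^c(w+f) + \eta = dd^c w + \eta_f$) and that it respects the pointwise bounds defining the two envelopes. Everything else is bookkeeping with \eqref{distance form} and an invocation of the K\"ahler-case result. I would also remark that, just as in the K\"ahler case, one has $P_\eta(u,v)\in\mathcal{E}^p(X,\eta)$ whenever $u,v\in\mathcal{E}^p(X,\eta)$ (the analogue of \cite[Theorem 2.13]{Darvas_DiNezza_Lu_2018_CM}), so all three distances on the right-hand side are well defined.
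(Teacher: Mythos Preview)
Your proposal is correct and follows essentially the same approach as the paper's proof: reduce to a genuine K\"ahler form $\eta_f$ via \eqref{distance form}, apply Darvas's Pythagorean formula there (the paper cites \cite[Corollary 4.14]{Darvas_2015_AIM}), and use the identity $P_{\eta_f}(u-f,v-f)=P_{\eta}(u,v)-f$ to translate back. Your write-up is somewhat more detailed (explicitly verifying $u-f,v-f\in\mathcal{E}^p(X,\eta_f)$ and justifying the envelope identity), but the argument is the same.
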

\begin{proof}
	By \cite[Corollary 4.14]{Darvas_2015_AIM} and \eqref{distance form}, we have
$$d_{p,\eta}^p(u,v)= d_{p,\eta_f}^p(u-f,P_{\eta_f}(u-f,v-f)) + d_{p,\eta_f}^p(v-f,P_{\eta_f}(u-f,v-f)).$$
The conclusion follows observing that $P_{\eta_f}(u-f,v-f)= P_\eta (u,v)-f$.
\end{proof}

The following results play a crucial role in the sequel.

\begin{lemma}\label{lem: Ip convergence}
Let  $\varphi=P_\theta(f),\psi=P_\theta(g)  \in \mathcal{H}_\theta $. Set $\varphi_\varepsilon:= P_{\omega_\varepsilon}(f)$ and $\psi_\varepsilon= P_{\omega_\varepsilon}(g)$. Then
$$
\lim_{\varepsilon\rightarrow 0} I_{p, \omega_{\varepsilon} }(\varphi_\varepsilon, 	\psi_\varepsilon)= I_{p,\theta}(\varphi, \psi).
$$
\end{lemma}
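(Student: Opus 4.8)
The plan is to relate the $\omega_\varepsilon$-psh envelopes $\varphi_\varepsilon, \psi_\varepsilon$ to the limiting $\theta$-psh envelopes $\varphi, \psi$, and then pass to the limit in each of the integrals making up $I_{p,\omega_\varepsilon}$. First I would observe that $\omega_\varepsilon = \theta + \varepsilon\omega$ decreases to $\theta$ as $\varepsilon \to 0$, so that for a fixed continuous $f$ the envelopes $\varphi_\varepsilon = P_{\omega_\varepsilon}(f)$ form a decreasing family as $\varepsilon \searrow 0$ (a smaller reference form gives a smaller class of competitors). Hence $\varphi_\varepsilon \searrow \tilde\varphi$ for some limit, and one checks $\tilde\varphi = \varphi = P_\theta(f)$: the limit is $\theta$-psh and $\leq f$, so $\tilde\varphi \leq \varphi$; conversely $\varphi$ is $\omega_\varepsilon$-psh and $\leq f$ for every $\varepsilon$, so $\varphi \leq \varphi_\varepsilon$ for all $\varepsilon$, giving $\varphi \leq \tilde\varphi$. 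The same holds for $\psi_\varepsilon \searrow \psi$. Moreover, by Theorem \ref{thm: regularity of psh envelope} applied with the Kähler form $\omega_\varepsilon + dd^c f_\varepsilon$ (after the translation by $f_\varepsilon$ as in \eqref{distance form}), each $\varphi_\varepsilon, \psi_\varepsilon$ has globally bounded Laplacian, in particular is bounded on $X$, and the same theorem gives the explicit formula $(\omega_\varepsilon + dd^c\varphi_\varepsilon)^n = \mathbf{1}_{\{\varphi_\varepsilon = f\}}(\omega_\varepsilon + dd^c f)^n$, and similarly for $\theta$.

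Next I would expand
$$
I_{p,\omega_\varepsilon}(\varphi_\varepsilon, \psi_\varepsilon) = \int_X |\varphi_\varepsilon - \psi_\varepsilon|^p (\omega_\varepsilon)_{\varphi_\varepsilon}^n + \int_X |\varphi_\varepsilon - \psi_\varepsilon|^p (\omega_\varepsilon)_{\psi_\varepsilon}^n,
$$
and treat each term separately; by symmetry it suffices to handle the first. Using the formula from Theorem \ref{thm: regularity of psh envelope}, this equals $\int_X |\varphi_\varepsilon - \psi_\varepsilon|^p \mathbf{1}_{\{\varphi_\varepsilon = f\}} \omega_{\varepsilon, f}^n$ where $\omega_{\varepsilon, f} := \omega_\varepsilon + dd^c f = \theta + dd^c f + \varepsilon\omega$. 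Since $\varphi_\varepsilon \searrow \varphi$ and $\psi_\varepsilon \searrow \psi$ pointwise, the integrand $|\varphi_\varepsilon - \psi_\varepsilon|^p$ converges pointwise to $|\varphi - \psi|^p$; and since all these potentials have minimal singularities and $f$ is fixed continuous, the functions are uniformly bounded (by $\sup_X f$ from above and by $V_\theta - C \geq V_{\omega_\varepsilon} - C$ from below, uniformly for small $\varepsilon$), so $|\varphi_\varepsilon - \psi_\varepsilon|^p$ is uniformly bounded. The measures $\omega_{\varepsilon,f}^n$ converge weakly to $\omega_f^n := (\theta + dd^c f)^n$ (they are smooth forms depending polynomially on $\varepsilon$). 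The remaining point is the behaviour of the contact set $\{\varphi_\varepsilon = f\}$, which I would handle by showing that $\mathbf{1}_{\{\varphi_\varepsilon = f\}} \omega_{\varepsilon,f}^n$ converges weakly to $\mathbf{1}_{\{\varphi = f\}}\omega_f^n = \theta_\varphi^n$: this is exactly the statement that $(\omega_\varepsilon)_{\varphi_\varepsilon}^n \to \theta_\varphi^n$ weakly, which follows from the continuity of the Monge–Ampère operator along monotone sequences of potentials with minimal singularities, combined with stability results for the Monge–Ampère measure under the perturbation $\theta \to \theta + \varepsilon\omega$ (cf. the convergence results in \cite{Boucksom_Eyssidieux_Guedj_Zeriahi_2010_AM} and \cite{Darvas_DiNezza_Lu_2018_CM}). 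Then a standard argument (uniformly bounded, a.e.-convergent integrands against weakly convergent positive measures, once one upgrades to strong enough convergence, e.g. testing against the continuous function $|\varphi - \psi|^p$ which is continuous on $\mathrm{Amp}(\theta)$ and then controlling the error near the non-ample locus, which is pluripolar and carries no mass) gives
$$
\int_X |\varphi_\varepsilon - \psi_\varepsilon|^p (\omega_\varepsilon)_{\varphi_\varepsilon}^n \longrightarrow \int_X |\varphi - \psi|^p \theta_\varphi^n.
$$

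The main obstacle I anticipate is precisely this last weak-convergence step: combining the pointwise (monotone) convergence $\varphi_\varepsilon \searrow \varphi$ with the weak convergence of the Monge–Ampère measures $(\omega_\varepsilon)_{\varphi_\varepsilon}^n$ to $\theta_\varphi^n$ in a way that lets one pass to the limit under the integral sign, especially because $|\varphi - \psi|^p$ is only quasi-continuous, not continuous, on all of $X$. I would address this by working on the ample locus $\mathrm{Amp}(\theta)$, where everything is locally bounded and the Bedford–Taylor theory applies with genuine continuity, and by using that the non-pluripolar products put no mass on the pluripolar complement; a convenient technical device is to first replace $|\varphi-\psi|^p$ by $\min(|\varphi-\psi|^p, M)$ for large $M$, pass to the limit there using that $\min(|\varphi_\varepsilon - \psi_\varepsilon|^p, M)$ is quasi-continuous and bounded, and then let $M \to \infty$ using the uniform $L^1$-type bound on $I_{p,\omega_\varepsilon}(\varphi_\varepsilon, \psi_\varepsilon)$ coming from the quasi-triangle inequality for $I_p$ together with Theorem \ref{thm: regularity of psh envelope}. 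Alternatively, if a monotone-stability statement for Monge–Ampère measures under simultaneous variation of potential and cohomology class is available off-the-shelf from the cited references, the argument shortens considerably, and I would quote it directly.
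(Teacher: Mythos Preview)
Your setup is correct (monotonicity $\varphi_\varepsilon \searrow \varphi$, the Berman formula from Theorem \ref{thm: regularity of psh envelope}), but you then take a much harder route than necessary and leave a genuine gap at the weak-convergence step. Two concrete issues:

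\emph{The uniform bound.} Your claim that the potentials themselves are uniformly bounded is false in the big and nef setting: $\varphi = P_\theta(f)$ has minimal singularities but is typically unbounded, so $\varphi_\varepsilon \searrow \varphi$ cannot be uniformly bounded below. (Also, $V_\theta \leq V_{\omega_\varepsilon}$, not the reverse.) What you actually need, and what the paper uses, is the much simpler bound on the \emph{difference}: since $\varphi_\varepsilon - \sup_X|f-g|$ is $\omega_\varepsilon$-psh and $\leq g$, it lies below $\psi_\varepsilon$, and symmetrically; hence $|\varphi_\varepsilon - \psi_\varepsilon| \leq \sup_X|f-g|$ uniformly in $\varepsilon$.

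\emph{The convergence of the integrals.} You try to pair a pointwise-convergent bounded integrand with weakly convergent measures, run into the quasi-continuity obstruction, and propose truncations and restriction to $\Amp(\theta)$. This is unnecessary. The point you miss is that, thanks to Theorem \ref{thm: regularity of psh envelope}, the Monge--Amp\`ere measures are absolutely continuous with respect to the \emph{fixed} smooth volume form $\omega^n$: writing $(\omega_\varepsilon + dd^c\varphi_\varepsilon)^n = \rho_\varepsilon\,\omega^n$ and $(\theta + dd^c\varphi)^n = \rho\,\omega^n$, one has $\rho_\varepsilon = \id_{\{\varphi_\varepsilon=f\}}(\omega_\varepsilon+dd^cf)^n/\omega^n$. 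Since $0 \leq \theta + dd^cf \leq C'\omega$ on the contact set, the densities $\rho_\varepsilon$ are uniformly bounded; since the contact sets $\{\varphi_\varepsilon = f\}$ increase to $\{\varphi = f\}$ and $(\omega_\varepsilon+dd^cf)^n \to (\theta+dd^cf)^n$ pointwise as smooth forms, $\rho_\varepsilon \to \rho$ pointwise. This is exactly Lemma \ref{lem: density converge}. Now the entire integral becomes
\[
\int_X |\varphi_\varepsilon - \psi_\varepsilon|^p\,\rho_\varepsilon\,\omega^n,
\]
with integrand converging pointwise and dominated by $(\sup_X|f-g|)^p \cdot \sup_\varepsilon \rho_\varepsilon \in L^\infty(\omega^n)$. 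Lebesgue's dominated convergence theorem against the fixed measure $\omega^n$ finishes the proof in one line, with no weak-convergence subtleties at all.
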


\begin{proof}
Observe that $|\varphi_{\varepsilon}-\psi_{\varepsilon}| \to |\varphi-\psi|$ pointwise on $X$ and they are uniformly bounded: 
$$
|\varphi_{\varepsilon}-\psi_{\varepsilon}| \leq \sup_X|f-g|. 
$$
By  Lemma \ref{lem: density converge} below and Lebesgue's dominated convergence theorem, 
\begin{flalign*}
	\lim_{\varepsilon \to 0} \int_X \left| \varphi_\varepsilon- \psi_\varepsilon\right|^p (\omega_\varepsilon+dd^c\varphi_\varepsilon)^n  = \int_X \left| \varphi- \psi\right|^p (\theta+dd^c \varphi)^n.
\end{flalign*}
Similarly, the other term in the definition of $I_{p, \omega_\varepsilon}$ also converges to the desired limit.
\end{proof}

\begin{lemma} \label{lem: density converge}
	Let $\varphi = P_{\theta}(f)\in \mathcal{H}_{\theta}$. For $\varepsilon>0$ we set $\varphi_{\varepsilon}= P_{\omega_{\varepsilon}}(f)$ and write 
	$$
	(\omega_{\varepsilon}+dd^c \varphi_{\varepsilon})^n = \rho_{\varepsilon} \omega^n\ ; \ (\theta+dd^c\varphi)^n = \rho \omega^n.
	$$ 
	Then $\varepsilon\mapsto \rho_{\varepsilon}$ is increasing, uniformly bounded and $\rho_{\varepsilon} \to \rho$ pointwise on $X$.  
\end{lemma}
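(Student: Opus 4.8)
The plan is to deduce everything from Berman's envelope theorem (Theorem~\ref{thm: regularity of psh envelope}), applied to the K\"ahler classes $\{\omega_\varepsilon\}$ (for which $\Amp(\omega_\varepsilon)=X$), together with the Bedford--Taylor continuity of the Monge--Amp\`ere operator along decreasing sequences of locally bounded potentials. First I would record the basic monotonicity in $\varepsilon$: since $\omega_{\varepsilon_1}\le\omega_{\varepsilon_2}$ for $\varepsilon_1<\varepsilon_2$, every $\omega_{\varepsilon_1}$-psh function is $\omega_{\varepsilon_2}$-psh, so $\varepsilon\mapsto\varphi_\varepsilon=P_{\omega_\varepsilon}(f)$ is non-decreasing and $\varphi_\varepsilon\ge P_\theta(f)=\varphi$; letting $\varepsilon\downarrow0$, the decreasing limit of the $\varphi_\varepsilon$ is $\theta$-psh, lies below $f$ and above $\varphi$, hence equals $\varphi$, i.e. $\varphi_\varepsilon\downarrow\varphi$. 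By Theorem~\ref{thm: regularity of psh envelope}, and since $\Amp(\omega_\varepsilon)=X$, each $\varphi_\varepsilon$ has bounded $\Delta_\omega$ on $X$ (so it is continuous, lies in $W^{2,q}_{\mathrm{loc}}$ for every $q$, and satisfies $dd^c\varphi_\varepsilon\le C(\varepsilon)\omega$), and $\rho_\varepsilon\,\omega^n=(\omega_\varepsilon+dd^c\varphi_\varepsilon)^n=\mathbf{1}_{\{\varphi_\varepsilon=f\}}(\omega_\varepsilon+dd^c f)^n$; likewise $\Delta_\omega\varphi$ is locally bounded on $\Amp(\theta)$ and $\rho\,\omega^n=(\theta+dd^c\varphi)^n=\mathbf{1}_{\{\varphi=f\}}(\theta+dd^c f)^n$.

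For the monotonicity of $\rho_\varepsilon$ I would fix $\varepsilon_1<\varepsilon_2$ and first check the identity $\varphi_{\varepsilon_1}=P_{\omega_{\varepsilon_1}}(\varphi_{\varepsilon_2})$: one inequality holds because $\varphi_{\varepsilon_1}$ is $\omega_{\varepsilon_1}$-psh and $\le\varphi_{\varepsilon_2}$, the other because $P_{\omega_{\varepsilon_1}}(\varphi_{\varepsilon_2})\le\varphi_{\varepsilon_2}\le f$ is $\omega_{\varepsilon_1}$-psh, hence $\le P_{\omega_{\varepsilon_1}}(f)=\varphi_{\varepsilon_1}$. Applying Theorem~\ref{thm: regularity of psh envelope} with ambient form $\omega_{\varepsilon_1}$ and the obstacle $\varphi_{\varepsilon_2}$ (which is continuous, with $dd^c\varphi_{\varepsilon_2}\le C(\varepsilon_2)\omega$ since $\Delta_\omega\varphi_{\varepsilon_2}$ is bounded above and $dd^c\varphi_{\varepsilon_2}\ge-\omega_{\varepsilon_2}$) gives
\[
\rho_{\varepsilon_1}\,\omega^n=(\omega_{\varepsilon_1}+dd^c\varphi_{\varepsilon_1})^n=\mathbf{1}_{\{\varphi_{\varepsilon_1}=\varphi_{\varepsilon_2}\}}(\omega_{\varepsilon_1}+dd^c\varphi_{\varepsilon_2})^n .
\]
On the contact set $\{\varphi_{\varepsilon_1}=\varphi_{\varepsilon_2}\}$ the nonnegative function $\varphi_{\varepsilon_2}-\varphi_{\varepsilon_1}\in W^{2,q}_{\mathrm{loc}}$ vanishes, so its complex Hessian vanishes a.e.\ there, i.e.\ $dd^c\varphi_{\varepsilon_2}=dd^c\varphi_{\varepsilon_1}$ a.e.\ on that set; hence a.e.\ there $0\le\omega_{\varepsilon_1}+dd^c\varphi_{\varepsilon_2}=\omega_{\varepsilon_1}+dd^c\varphi_{\varepsilon_1}\le\omega_{\varepsilon_2}+dd^c\varphi_{\varepsilon_2}$, and taking $n$-th wedge powers yields $\rho_{\varepsilon_1}\le\rho_{\varepsilon_2}$ a.e. This proves monotonicity, and the uniform bound follows at once: fixing $\varepsilon_0>0$ we get $\rho_\varepsilon\le\rho_{\varepsilon_0}$ for $\varepsilon\le\varepsilon_0$, while $\rho_{\varepsilon_0}\in L^\infty(X)$ because $\Delta_\omega\varphi_{\varepsilon_0}$ is bounded on $X$.

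For the pointwise convergence, monotonicity gives $\rho_\varepsilon\downarrow\rho_0$ a.e.\ as $\varepsilon\downarrow0$, with $0\le\rho_0\le\rho_{\varepsilon_0}\in L^1(X)$, so $\rho_\varepsilon\,\omega^n\to\rho_0\,\omega^n$ weakly by dominated convergence. On the other hand, over a small ball $B\subset\Amp(\theta)$ I would write $\theta=dd^c g$ and $\omega=dd^c h$ with $g$ smooth and $h$ smooth, $h\ge0$; then the functions $g+\varepsilon h+\varphi_\varepsilon$ are locally bounded psh and decrease (as $\varepsilon\downarrow0$) to the locally bounded psh function $g+\varphi$ --- here using that $\varphi$, having minimal singularities, is locally bounded on $\Amp(\theta)$ --- so Bedford--Taylor continuity yields $(\omega_\varepsilon+dd^c\varphi_\varepsilon)^n\to(\theta+dd^c\varphi)^n$ weakly on $\Amp(\theta)$. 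Comparing the two weak limits forces $\rho_0\,\omega^n=(\theta+dd^c\varphi)^n=\rho\,\omega^n$ on $\Amp(\theta)$, and hence $\rho_0=\rho$ a.e.\ on $X$ since $X\setminus\Amp(\theta)$ has Lebesgue measure zero.

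The hard part will be the monotonicity step: the trick is to realise $\varphi_{\varepsilon_1}$ as the envelope $P_{\omega_{\varepsilon_1}}(\varphi_{\varepsilon_2})$ of a \emph{bounded-Hessian} obstacle, which is what makes the Monge--Amp\`ere identity of Theorem~\ref{thm: regularity of psh envelope} unambiguous and reduces the comparison to the trivial inequality $\omega_{\varepsilon_1}\le\omega_{\varepsilon_2}$ on the contact set (together with the elementary fact that a nonnegative $W^{2,1}$ function has vanishing complex Hessian a.e.\ on its zero set). Once this is in place, the uniform bound is immediate and the convergence is a routine combination of monotone/dominated convergence with Bedford--Taylor continuity.
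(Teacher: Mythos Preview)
Your argument is correct, but it takes a considerably more involved route than the paper's.  For monotonicity you introduce the auxiliary identity $\varphi_{\varepsilon_1}=P_{\omega_{\varepsilon_1}}(\varphi_{\varepsilon_2})$, reapply Berman's theorem with the non-smooth obstacle $\varphi_{\varepsilon_2}$, and then invoke the $W^{2,q}$ fact that second derivatives of a nonnegative function vanish a.e.\ on its zero set.  The paper avoids all of this by simply keeping the \emph{fixed} smooth obstacle $f$: since $\rho_\varepsilon\,\omega^n=\id_{D_\varepsilon}(\omega_\varepsilon+dd^cf)^n$ with $D_\varepsilon:=\{\varphi_\varepsilon=f\}$ increasing in $\varepsilon$, and $0\le \omega_\varepsilon+dd^cf\le\omega_{\varepsilon'}+dd^cf$ on $D_\varepsilon\subset D_{\varepsilon'}$ whenever $\varepsilon<\varepsilon'$, the inequality $\rho_\varepsilon\le\rho_{\varepsilon'}$ is immediate.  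The uniform bound then comes directly from $dd^cf\le C\omega$ rather than from bootstrapping through a fixed $\varepsilon_0$.

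For the convergence you pass through Bedford--Taylor continuity on $\Amp(\theta)$ and compare weak limits, which yields $\rho_\varepsilon\to\rho$ only almost everywhere.  The paper instead argues pointwise on the sets $D=\cap_\varepsilon D_\varepsilon=\{\varphi=f\}$ and $X\setminus D$: at $x\in D$ one has $\rho_\varepsilon(x)\omega^n=(\theta+\varepsilon\omega+dd^cf)^n=(\rho(x)+O(\varepsilon))\omega^n$, while at $x\notin D$ one has $x\notin D_\varepsilon$ for small $\varepsilon$, hence $\rho_\varepsilon(x)=0=\rho(x)$.  Your approach is more robust (it would survive if the obstacle were less regular), but here the paper's direct computation is shorter and delivers the pointwise statement claimed in the lemma.
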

\begin{proof}
Define, for $\varepsilon>0$, $D_{\varepsilon}:= \{x \in X \setdef \varphi_{\varepsilon}(x)= f(x)\}$. Since $\{\varphi_{\varepsilon}\}$ is increasing and $\varphi_{\varepsilon}\leq f$, $\{D_{\varepsilon}\}$ is also increasing. We set $D:= \cap_{\varepsilon>0} D_{\varepsilon}$. Then $D=\{x \in X \setdef \varphi(x)=f(x)\}$.

	For $\varepsilon'>\varepsilon>0$, it follows from Theorem \ref{thm: regularity of psh envelope} that 
\begin{flalign*}
(\omega_{\varepsilon}+dd^c \varphi_{\varepsilon})^n & = \id_{\{\varphi_{\varepsilon}=f\}} (\omega_{\varepsilon}+dd^c f)^n\\
& \leq  \id_{\{\varphi_{\varepsilon}=f\}}(\omega_{\varepsilon'}+dd^c f)^n \leq (\omega_{\varepsilon'}+dd^c \varphi_{\varepsilon'})^n. 	
\end{flalign*}
Here we use the fact that $0 \leq \omega_{\varepsilon} +dd^c f \leq \omega_{\varepsilon'} +dd^c f$ on $D_{\varepsilon}$. This proves the first statement. The second statement follows from the bound $dd^c f\leq C\omega$. We now prove the last statement. If $x\in D$, using $(\theta+dd^c f)\leq C' \omega$ we can write 
\begin{flalign*}
\rho_{\varepsilon}(x) \omega^n &= (\theta + \varepsilon \omega +dd^c f)^n \leq (\theta +dd^c f)^n + O(\varepsilon) \omega^n\\
&= (\rho(x)+O(\varepsilon))\omega^n. 	
\end{flalign*}
Hence $\rho_{\varepsilon}(x) \to \rho(x)$. If $x\notin D$ then $x\notin D_{\varepsilon}$ for $\varepsilon>0$ small enough, hence $\rho_{\varepsilon}(x)=0=\rho(x)$. 
\end{proof}

\begin{lemma}
	\label{lem: time derivative converge}
	Let $\varphi_j= P_{\theta}(f_j) \in \mathcal{H}_{\theta}$, for $j=0,1$. Let $\varphi_t$ (resp. $\varphi_{t,\varepsilon}$) be weak Mabuchi geodesics joining $\varphi_0$ and $\varphi_1$ (resp. $\varphi_{0,\varepsilon}= P_{\omega_{\varepsilon}}(f_0)$ and $\varphi_{1,\varepsilon}= P_{\omega_{\varepsilon}}(f_1)$). Then we have the following pointwise convergence
	$$
	\id_{\{\varphi_{0,\varepsilon} =f_0\}} |\dot{\varphi}_{0,\varepsilon}|^p \rightarrow \id_{\{\varphi_{0} =f_0\}} |\dot{\varphi}_{0}|^p.
	$$
\end{lemma}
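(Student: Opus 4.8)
The plan is to reduce the claim to the monotone convergence $\varphi_{t,\varepsilon}\downarrow\varphi_t$ of the geodesics plus elementary convexity. First I would record a few preliminaries: since $\omega_\varepsilon\ge\theta$ we have $\varphi_{j,\varepsilon}=P_{\omega_\varepsilon}(f_j)\ge P_\theta(f_j)=\varphi_j$, and $\varphi_{j,\varepsilon}$ decreases, as $\varepsilon\downarrow 0$, to a $\theta$-psh function $\le f_j$, hence to $\varphi_j$; moreover on $D:=\{\varphi_0=f_0\}$ one has $f_0=\varphi_0\le\varphi_{0,\varepsilon}\le f_0$, so $D\subseteq D_\varepsilon:=\{\varphi_{0,\varepsilon}=f_0\}$ for every $\varepsilon>0$. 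Since $\varphi_0,\varphi_1$ have minimal singularities and $\varphi_{0,\varepsilon},\varphi_{1,\varepsilon}$ are bounded, the geodesics $t\mapsto\varphi_t$ and $t\mapsto\varphi_{t,\varepsilon}$ are convex in $t$ (Section~\ref{sec geo}), so the right derivatives
$$
\dot\varphi_0(x)=\inf_{0<t\le 1}\frac{\varphi_t(x)-\varphi_0(x)}{t},\qquad \dot\varphi_{0,\varepsilon}(x)=\inf_{0<t\le 1}\frac{\varphi_{t,\varepsilon}(x)-\varphi_{0,\varepsilon}(x)}{t}
$$
exist for every $x\in X$.

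Next I would establish $\varphi_{t,\varepsilon}\downarrow\varphi_t$ pointwise on $X$, for each fixed $t\in[0,1]$. If $0<\varepsilon'<\varepsilon$ then $\pi^*\omega_{\varepsilon'}\le\pi^*\omega_\varepsilon$ and $\varphi_{j,\varepsilon'}\le\varphi_{j,\varepsilon}$, so any $\omega_{\varepsilon'}$-subgeodesic in $\mathcal{S}_{[0,1]}(\varphi_{0,\varepsilon'},\varphi_{1,\varepsilon'})$ is an $\omega_\varepsilon$-subgeodesic with endpoints $\le\varphi_{j,\varepsilon}$; taking suprema gives $\varphi_{t,\varepsilon'}\le\varphi_{t,\varepsilon}$, and the same comparison with $\theta\le\omega_\varepsilon$, $\varphi_j\le\varphi_{j,\varepsilon}$ gives $\varphi_t\le\varphi_{t,\varepsilon}$. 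Let $\Phi_\varepsilon$ be the complexified potential associated with $t\mapsto\varphi_{t,\varepsilon}$ and put $\Psi:=\lim_{\varepsilon\downarrow 0}\Phi_\varepsilon\ge\Phi$. As a decreasing limit of $\pi^*\omega_\varepsilon$-psh functions with $\pi^*\omega_\varepsilon\downarrow\pi^*\theta$, $\Psi$ is $\pi^*\theta$-psh, and the associated curve $\psi_t=\lim_\varepsilon\varphi_{t,\varepsilon}$ satisfies $\limsup_{t\to j}\psi_t\le\inf_\varepsilon\varphi_{j,\varepsilon}=\varphi_j$; hence $\Psi\in\mathcal{S}_{[0,1]}(\varphi_0,\varphi_1)$, so $\Psi\le\Phi$ and therefore $\Psi=\Phi$, i.e. $\varphi_{t,\varepsilon}\downarrow\varphi_t$.

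Finally I would conclude pointwise. If $x\in D$, then $\varphi_{0,\varepsilon}(x)=f_0(x)$ for all $\varepsilon>0$, so $\dot\varphi_{0,\varepsilon}(x)=\inf_{0<t\le 1}\frac{\varphi_{t,\varepsilon}(x)-f_0(x)}{t}$; for fixed $t$ this quotient decreases, as $\varepsilon\downarrow 0$, to $\frac{\varphi_t(x)-f_0(x)}{t}$, and interchanging the two infima yields $\dot\varphi_{0,\varepsilon}(x)\downarrow\dot\varphi_0(x)$, whence $|\dot\varphi_{0,\varepsilon}(x)|^p\to|\dot\varphi_0(x)|^p$; as $\id_{D_\varepsilon}(x)=\id_D(x)=1$ this is the desired convergence at $x$. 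If $x\notin D$, then $\varphi_0(x)<f_0(x)$ and $\varphi_{0,\varepsilon}(x)\downarrow\varphi_0(x)$, so $\varphi_{0,\varepsilon}(x)<f_0(x)$, i.e. $x\notin D_\varepsilon$, for all sufficiently small $\varepsilon$; hence $\id_{D_\varepsilon}(x)|\dot\varphi_{0,\varepsilon}(x)|^p=0=\id_D(x)|\dot\varphi_0(x)|^p$ eventually. This gives the pointwise convergence everywhere on $X$, and the same argument applies at $t=1$. I expect the main obstacle to be the monotone convergence $\varphi_{t,\varepsilon}\downarrow\varphi_t$ of the geodesics, i.e. the stability of $\pi^*\theta$-plurisubharmonicity under decreasing limits with the varying background forms $\pi^*\omega_\varepsilon$; granting it, the rest is routine convexity bookkeeping.
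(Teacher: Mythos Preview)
Your argument is correct and follows essentially the same route as the paper: both proofs sandwich $\dot\varphi_{0,\varepsilon}(x)$ between $\dot\varphi_0(x)$ (from $\varphi_{t,\varepsilon}\ge\varphi_t$) and a difference quotient $\frac{\varphi_{s,\varepsilon}(x)-f_0(x)}{s}$ (from convexity), then let $\varepsilon\to 0$ and $s\to 0$; the treatment of $x\notin D$ via $\varphi_{0,\varepsilon}(x)\downarrow\varphi_0(x)<f_0(x)$ is identical. The only difference is one of detail: the paper simply writes ``letting first $\varepsilon\to 0$'' without justifying that $\varphi_{s,\varepsilon}(x)\to\varphi_s(x)$, whereas you supply this step explicitly by showing that $\Psi:=\lim_\varepsilon\Phi_\varepsilon$ is a $\pi^*\theta$-psh competitor in $\mathcal{S}_{[0,1]}(\varphi_0,\varphi_1)$, hence $\varphi_{t,\varepsilon}\downarrow\varphi_t$; your interchange-of-infima formulation is then a clean repackaging of the same sandwich.
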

\begin{proof}
	Since $P_{\omega_{\varepsilon}}(f_j)\geq P_{\theta}(f_j)$, $j=0,1$, it follows from the definition that $\varphi_{t,\varepsilon} \geq \varphi_t$ (the curve $\varphi_t$ is a candidate defining $\varphi_{t,\varepsilon}$ for any $\varepsilon>0$). Set $D_{\varepsilon}=\{\varphi_{0,\varepsilon} =f_0\}$ and $D= \{\varphi_{0}=f_0\}$. Then $D_{\varepsilon}$ is increasing and $\cap_{\varepsilon>0} D_{\varepsilon}=D$ since $\varphi_0 \leq \varphi_{0,\varepsilon}\leq f_0$. If $x\in D$ then, for all small $s>0$, 
	\begin{flalign*}
		\dot{\varphi}_0(x)=\lim_{t\rightarrow 0} \frac{\varphi_t(x)-f_0(x)}{t} \leq \dot{\varphi}_{0,\varepsilon}(x) \leq  \frac{\varphi_{s,\varepsilon}(x)-\varphi_{0,\varepsilon}(x)}{s}, 
	\end{flalign*}
    where in the last inequality we use the convexity of the geodesic in $t$.
	Letting first $\varepsilon\to 0$ and then $s\to 0$ shows that $\dot{\varphi}_{0,\varepsilon}(x)$ converges to $\dot{\varphi}_0(x)$. If $x\notin D$ then $x\notin D_{\varepsilon}$, for $\varepsilon>0$ small enough. In this case the convergence we want to prove is trivial. 
\end{proof}

\begin{theorem}\label{thm: definition of dp}
Let $\varphi_0, \varphi_1\in \mathcal{H}_\theta$ and $\varphi_{i,\varepsilon}=P_{\omega_{\varepsilon}}(f_i)$, $i=0,1$. Let $d_{p, \varepsilon}$ be the Mabuchi distance w.r.t.  $\omega_\varepsilon$ defined in \eqref{distance form}. Then
$$
\lim_{\varepsilon \rightarrow 0} d^p_{p,\varepsilon}(\varphi_{0, \varepsilon}, \varphi_{1, \varepsilon}) = \int_X |\dot{\varphi}_0|^p (\theta+dd^c \varphi_0)^n=\int_X |\dot{\varphi}_1|^p (\theta+dd^c \varphi_1)^n,
$$ 
where $\varphi_t$ is the weak Mabuchi geodesic connecting $\varphi_0$ and $\varphi_1$. 
\end{theorem}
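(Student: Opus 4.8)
The plan is to turn the left-hand side into an explicit $L^p$-energy integral for the K\"ahler forms $\omega_\varepsilon$ by means of Darvas' Mabuchi $d_p$-formula, and then to let $\varepsilon\to 0$ by dominated convergence, the two nontrivial pointwise limits being supplied by Lemmas~\ref{lem: density converge} and~\ref{lem: time derivative converge}.

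\emph{Step 1 (reduction to a K\"ahler reference).} Fix $\varepsilon>0$ and pick $g_\varepsilon\in\mathcal C^\infty(X,\RR)$ with $\tilde\omega_\varepsilon:=\omega_\varepsilon+dd^c g_\varepsilon$ K\"ahler. Then $\varphi_{i,\varepsilon}-g_\varepsilon=P_{\tilde\omega_\varepsilon}(f_i-g_\varepsilon)$, where $f_i-g_\varepsilon$ is continuous with $dd^c(f_i-g_\varepsilon)\le C_\varepsilon\omega$; since $\{\tilde\omega_\varepsilon\}$ is K\"ahler, Theorem~\ref{thm: regularity of psh envelope} yields a global bound on $\Delta_\omega(\varphi_{i,\varepsilon}-g_\varepsilon)$, whence $dd^c(\varphi_{i,\varepsilon}-g_\varepsilon)\le C_\varepsilon'\tilde\omega_\varepsilon$ on $X$. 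In particular $\varphi_{i,\varepsilon}-g_\varepsilon\in\PSH(X,\tilde\omega_\varepsilon)$ has bounded Laplacian, so \cite[Lemma~4.11]{Darvas_2015_AIM} (the extension of \eqref{eq: dp formula}) applies with reference form $\tilde\omega_\varepsilon$. Note also that $\varphi_{i,\varepsilon}$ is bounded, hence lies in $\Ec^p(X,\omega_\varepsilon)$.

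\emph{Step 2 (an explicit formula at level $\varepsilon$).} Arguing verbatim as in the proof of Proposition~\ref{prop: dp for kahler classes}, the Mabuchi geodesic (w.r.t.\ $\tilde\omega_\varepsilon$) joining $\varphi_{0,\varepsilon}-g_\varepsilon$ and $\varphi_{1,\varepsilon}-g_\varepsilon$ equals $\varphi_{t,\varepsilon}-g_\varepsilon$, where $\varphi_{t,\varepsilon}$ is the weak Mabuchi geodesic w.r.t.\ $\omega_\varepsilon$ joining $\varphi_{0,\varepsilon},\varphi_{1,\varepsilon}$; since these endpoints are bounded, $\varphi_{t,\varepsilon}$ is Lipschitz in $t$ and $\dot\varphi_{0,\varepsilon},\dot\varphi_{1,\varepsilon}$ are well defined. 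As $V_\varepsilon=\vol(\omega_\varepsilon)=\vol(\tilde\omega_\varepsilon)$ and $\tilde\omega_\varepsilon+dd^c(\varphi_{t,\varepsilon}-g_\varepsilon)=\omega_\varepsilon+dd^c\varphi_{t,\varepsilon}$, definition \eqref{distance form} together with \eqref{eq: dp formula} give, for every $t\in[0,1]$,
\[
d_{p,\varepsilon}^p(\varphi_{0,\varepsilon},\varphi_{1,\varepsilon})=\frac{1}{V_\varepsilon}\int_X|\dot\varphi_{t,\varepsilon}|^p(\omega_\varepsilon+dd^c\varphi_{t,\varepsilon})^n ;
\]
in particular the $t=0$ and $t=1$ integrals coincide, which yields the asserted equality once we pass to the limit.

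\emph{Step 3 (limit $\varepsilon\to 0$).} By Theorem~\ref{thm: regularity of psh envelope}, $(\omega_\varepsilon+dd^c\varphi_{0,\varepsilon})^n=\id_{\{\varphi_{0,\varepsilon}=f_0\}}(\omega_\varepsilon+dd^c f_0)^n=:\rho_\varepsilon\omega^n$ and $(\theta+dd^c\varphi_0)^n=\id_{\{\varphi_0=f_0\}}(\theta+dd^c f_0)^n=:\rho\,\omega^n$, so that
\[
\int_X|\dot\varphi_{0,\varepsilon}|^p(\omega_\varepsilon+dd^c\varphi_{0,\varepsilon})^n=\int_X \id_{\{\varphi_{0,\varepsilon}=f_0\}}\,|\dot\varphi_{0,\varepsilon}|^p\,\rho_\varepsilon\,\omega^n .
\]
The integrand converges pointwise on $X$ to $\id_{\{\varphi_0=f_0\}}|\dot\varphi_0|^p\rho$ by Lemma~\ref{lem: density converge} and Lemma~\ref{lem: time derivative converge}, and it is uniformly bounded: $\rho_\varepsilon$ is uniformly bounded (Lemma~\ref{lem: density converge}), while the Lipschitz estimate of Section~\ref{sec geo} applied to the bounded potentials $\varphi_{0,\varepsilon},\varphi_{1,\varepsilon}$ gives $|\dot\varphi_{0,\varepsilon}|\le\sup_X|\varphi_{0,\varepsilon}-\varphi_{1,\varepsilon}|\le\sup_X|f_0-f_1|$. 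Lebesgue's dominated convergence theorem then yields $\int_X|\dot\varphi_{0,\varepsilon}|^p(\omega_\varepsilon+dd^c\varphi_{0,\varepsilon})^n\to\int_X|\dot\varphi_0|^p(\theta+dd^c\varphi_0)^n$, and likewise for the index $1$ (using the time-reversed version of Lemma~\ref{lem: time derivative converge}). Finally $V_\varepsilon=\int_X(\theta+\varepsilon\omega)^n\to\int_X\theta^n=\vol(\theta)=1$ by continuity of the volume function on the big cone, and dividing completes the proof.

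\emph{Expected main difficulty.} All the genuine analytic content sits in Lemmas~\ref{lem: density converge} and~\ref{lem: time derivative converge}, which are already established; within the present argument the only points needing care are that Berman's regularity (Theorem~\ref{thm: regularity of psh envelope}) produces the bounded-Laplacian hypothesis in exactly the form required by \cite[Lemma~4.11]{Darvas_2015_AIM}, and that the translation identity relating the $\tilde\omega_\varepsilon$- and $\omega_\varepsilon$-geodesics from Proposition~\ref{prop: dp for kahler classes} carries over unchanged (here it is important that the $\varphi_{i,\varepsilon}$ are bounded, so the weak geodesics are honest Lipschitz curves). Both verifications are routine.
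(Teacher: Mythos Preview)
Your proposal is correct and follows essentially the same route as the paper: express $V_\varepsilon d_{p,\varepsilon}^p(\varphi_{0,\varepsilon},\varphi_{1,\varepsilon})$ as $\int_X|\dot\varphi_{0,\varepsilon}|^p(\omega_\varepsilon+dd^c\varphi_{0,\varepsilon})^n$ via Darvas' formula, localize to the contact set $\{\varphi_{0,\varepsilon}=f_0\}$ by Theorem~\ref{thm: regularity of psh envelope}, and pass to the limit by dominated convergence using Lemmas~\ref{lem: density converge} and~\ref{lem: time derivative converge} together with the uniform bound $|\dot\varphi_{0,\varepsilon}|\le\sup_X|f_0-f_1|$. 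Your Step~1 simply makes explicit the bounded-Laplacian verification needed to invoke \cite[Lemma~4.11]{Darvas_2015_AIM}, which the paper handles tersely by citing \eqref{eq: dp formula} directly.
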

Compared to \cite{DiNezza_Guedj_2016_CM} our approach is slightly different. We also emphasize that by \cite[Example 4.5]{DiNezza_2015_JGA}, there are  functions in $\Ec^p(X,\theta)$ which are not in $\Ec^p(X,\omega)$.

\begin{proof}
Let $\varphi_{t,\varepsilon}$ denote the $\omega_\varepsilon$-geodesic joining $\varphi_{0,\varepsilon}$ and $\varphi_{1,\varepsilon}$.  Set $D_{\varepsilon}=\{\varphi_{0,\varepsilon} =f_0\}$ and $D= \{\varphi_{0}=f_0\}$. Combining  \eqref{eq: dp formula} and Theorem \ref{thm: regularity of psh envelope} we obtain
$$V_\varepsilon d_{p, \varepsilon}^p (\varphi_{0, \varepsilon}, \varphi_{ 1,\varepsilon}) = \int_X |\dot{\varphi}_{0, \varepsilon}|^p  (\omega_{\varepsilon}+dd^c \varphi_{0, \varepsilon})^n= \int_{D_\varepsilon} |\dot{\varphi}_{0, \varepsilon}|^p (\omega_\varepsilon+dd^c f_0)^n.$$
Since $|\varphi_{0, \varepsilon}-\varphi_{1,\varepsilon}|\leq \sup_X |f_0-f_1|$ and $f_0-f_1$ is bounded, \eqref{eq: Lip} ensures that $\dot{\varphi}_{0,\varepsilon}$ is uniformly bounded. It follows from Lemma \ref{lem: density converge} and Lemma \ref{lem: time derivative converge} that the functions $\id_{D_\varepsilon} |\dot{\varphi}_{0,\varepsilon}|^p \rho_\varepsilon$ and $ \id_{D} |\dot{\varphi}_{0}|^p \rho$ are uniformly bounded and $\id_{D_\varepsilon} |\dot{\varphi}_{0,\varepsilon}|^p \rho_\varepsilon$ converges pointwise to $ \id_{D} |\dot{\varphi}_{0}|^p \rho$. We also observe that $V_\varepsilon $ decreases to $\vol(\theta)=1$.
Lebesgue's dominated convergence theorem then  yields
$$
\lim_{\varepsilon\to 0}  d_{p, \varepsilon}^p (\varphi_{0, \varepsilon}, \varphi_{ 1,\varepsilon}) = \int_{D} |\dot{\varphi}_0|^p (\theta+dd^c f_0)^n=\int_{X} |\dot{\varphi}_0|^p (\theta+dd^c \varphi_0)^n,
$$ 
where in the last equality we use Theorem \ref{thm: regularity of psh envelope}. This shows the first equality in the statement. The second one is obtained by reversing the role of $\varphi_0$ and $\varphi_1$.  
\end{proof}
\begin{definition}\label{def: dp bounded Lap}
	Given $\varphi_0,\varphi_1 \in \mathcal{H}_{\theta}$, we define 
	$$
	d_p(\varphi_0,\varphi_1) := \lim_{\varepsilon\to 0} d_{p,\varepsilon}(\varphi_{0,\varepsilon},\varphi_{1,\varepsilon}).
	$$
	The limit exists and is independent of the choice of $\omega$ as shown in Theorem \ref{thm: definition of dp}.
\end{definition}
\begin{lemma}
$d_p$ is a distance on $\mathcal{H}_{\theta}$. 
 \end{lemma}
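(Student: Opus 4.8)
The plan is to verify the three defining properties of a distance---symmetry, non-degeneracy, and the triangle inequality---by exploiting the fact that $d_p$ on $\mathcal{H}_\theta$ arises as the limit of the genuine distances $d_{p,\varepsilon}$ on the (translated) K\"ahler classes $\{\omega_\varepsilon\}$, for which all three properties are already known by Darvas's work.

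\textbf{Symmetry and positivity.} Symmetry is immediate: for each $\varepsilon>0$, $d_{p,\varepsilon}(\varphi_{0,\varepsilon},\varphi_{1,\varepsilon})=d_{p,\varepsilon}(\varphi_{1,\varepsilon},\varphi_{0,\varepsilon})$ since $d_{p,\varepsilon}$ is a metric, and passing to the limit gives $d_p(\varphi_0,\varphi_1)=d_p(\varphi_1,\varphi_0)$. Non-negativity is inherited in the same way. The substantive part of non-degeneracy is the implication $d_p(\varphi_0,\varphi_1)=0\Rightarrow\varphi_0=\varphi_1$. Here I would compare $d_p$ with the quantity $I_p$. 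By Theorem \ref{thm: definition of dp}, $d_p^p(\varphi_0,\varphi_1)=\int_X|\dot\varphi_0|^p\theta_{\varphi_0}^n$ where $\varphi_t$ is the weak Mabuchi geodesic; one expects an inequality of the form $C^{-1}I_{p,\theta}(\varphi_0,\varphi_1)\le d_p^p(\varphi_0,\varphi_1)$ for a dimensional constant $C=C(n,p)$, obtained by taking the $\varepsilon\to0$ limit of the corresponding comparison in the K\"ahler setting (which holds by \cite{Darvas_2015_AIM}; note $V_\varepsilon\to1$), using Lemma \ref{lem: Ip convergence} on the left and Theorem \ref{thm: definition of dp} on the right. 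Then $d_p(\varphi_0,\varphi_1)=0$ forces $I_{p,\theta}(\varphi_0,\varphi_1)=0$, and the non-degeneracy of $I_p$ recalled in the Preliminaries (via the domination principle, \cite[Proposition 2.4]{Darvas_DiNezza_Lu_2018_CM}) yields $\varphi_0=\varphi_1$.

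\textbf{Triangle inequality.} For $\varphi_0,\varphi_1,\varphi_2\in\mathcal{H}_\theta$ write $\varphi_i=P_\theta(f_i)$ and $\varphi_{i,\varepsilon}=P_{\omega_\varepsilon}(f_i)$. For each fixed $\varepsilon>0$, $d_{p,\varepsilon}$ is a genuine metric on $\mathcal{E}^p(X,\omega_\varepsilon)$ (after translation by a K\"ahler potential, using \eqref{distance form} and Proposition \ref{prop: dp for kahler classes}), so
\begin{equation*}
d_{p,\varepsilon}(\varphi_{0,\varepsilon},\varphi_{2,\varepsilon})\le d_{p,\varepsilon}(\varphi_{0,\varepsilon},\varphi_{1,\varepsilon})+d_{p,\varepsilon}(\varphi_{1,\varepsilon},\varphi_{2,\varepsilon}).
\end{equation*}
Letting $\varepsilon\to0$ and invoking Definition \ref{def: dp bounded Lap} (the limit exists for each of the three pairs by Theorem \ref{thm: definition of dp}) gives $d_p(\varphi_0,\varphi_2)\le d_p(\varphi_0,\varphi_1)+d_p(\varphi_1,\varphi_2)$.

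\textbf{Main obstacle.} The only delicate point is the non-degeneracy step: one must be sure that the comparison inequality between $d_{p,\varepsilon}$ and $I_{p,\omega_\varepsilon}$ is uniform in $\varepsilon$ (a dimensional constant, not depending on $\varepsilon$) so that it survives the limit, and that both sides converge to the expected $\theta$-quantities---the right-hand side via Theorem \ref{thm: definition of dp}, the left-hand side via Lemma \ref{lem: Ip convergence}. Everything else is a routine limit of inequalities valid in the K\"ahler case.
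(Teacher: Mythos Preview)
Your proposal is correct and follows essentially the same route as the paper: the triangle inequality is inherited from $d_{p,\varepsilon}$ by passing to the limit, and non-degeneracy is obtained from the uniform comparison $d_{p,\varepsilon}^p\geq C^{-1}I_{p,\omega_\varepsilon}$ of \cite{Darvas_2015_AIM} (with $C=C(n,p)$ independent of $\varepsilon$), combined with Lemma~\ref{lem: Ip convergence} and the domination principle. Your explicit remark that the constant in Darvas's comparison is dimensional and hence survives the limit is exactly the point the paper relies on implicitly.
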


\begin{proof}
The triangle inequality immediately follows from the fact that $d_{p, \varepsilon}$ is a distance. From \cite[Theorem 5.5]{Darvas_2015_AIM} we know that $$d_{p, \varepsilon}^p (\varphi_{0, \varepsilon}, \varphi_{1, \varepsilon})\geq \frac{1}{C}\, I_{p,\omega_\varepsilon} (\varphi_{0, \varepsilon}, \varphi_{1, \varepsilon}), \qquad C>0.$$
Also, by Lemma \ref{lem: Ip convergence} we have $\lim_{\varepsilon \rightarrow 0}  I_{p,\omega_\varepsilon} (\varphi_{0, \varepsilon}, \varphi_{1, \varepsilon}) =  I_{p, \theta} (\varphi_{0}, \varphi_1) $. It  follows from the domination principe (see \cite{Darvas_DiNezza_Lu_2018_CM}, \cite{Bloom_Levenberg_2012_PA}) that 
$$ 
I_{p, \theta} (\varphi_{0}, \varphi_1)= 0 \Leftrightarrow \varphi_0=\varphi_1.
$$
 Hence, $d_p$ is non-degenerate.
\end{proof}

\subsection{Extension of $d_p$ on $\Ec^p(X,\theta)$}

  The following  comparison between $I_p$ and $d_p$ was established in \cite[Theorem 3]{Darvas_2015_AIM} in the K\"ahler case. 
\begin{prop}\label{prop: comparison dp and Ip}
Given $\varphi_0,\varphi_1 \in \mathcal H_\theta$ there exists a constant $C>0$ (depending only on $n$) such that
\begin{equation}\label{eq: comparision I_p and d_p}
\frac{1}{C}I_p(\varphi_0,\varphi_1) \leq d_p^p(\varphi_0,\varphi_1) \leq C I_p(\varphi_0,\varphi_1).
\end{equation}
\end{prop}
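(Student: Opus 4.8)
The plan is to obtain \eqref{eq: comparision I_p and d_p} by passing to the limit $\varepsilon\to 0$ in the K\"ahler comparison of \cite[Theorem 3]{Darvas_2015_AIM}, using precisely the approximation scheme through which $d_p$ was defined on $\mathcal{H}_{\theta}$. First I would fix $\varphi_i=P_\theta(f_i)\in\mathcal{H}_{\theta}$, $i=0,1$, and set $\varphi_{i,\varepsilon}=P_{\omega_\varepsilon}(f_i)$. Writing $\omega_\varepsilon+dd^c f_\varepsilon$ for the genuine K\"ahler form cohomologous to $\omega_\varepsilon$, translation by $f_\varepsilon$ identifies $\varphi_{i,\varepsilon}-f_\varepsilon$ with $P_{\omega_\varepsilon+dd^c f_\varepsilon}(f_i-f_\varepsilon)$, which by Theorem \ref{thm: regularity of psh envelope} (applied to the K\"ahler form $\omega_\varepsilon+dd^c f_\varepsilon$) has globally bounded Laplacian, and is in particular bounded, hence lies in $\mathcal{E}^p$ of the K\"ahler class $\{\omega_\varepsilon\}$. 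Moreover this translation changes neither $d_{p,\varepsilon}(\varphi_{0,\varepsilon},\varphi_{1,\varepsilon})$ (that is how $d_{p,\varepsilon}$ was defined in \eqref{distance form}, legitimately so by Proposition \ref{prop: dp for kahler classes}) nor $I_{p,\omega_\varepsilon}(\varphi_{0,\varepsilon},\varphi_{1,\varepsilon})$, since both $|\varphi_{0,\varepsilon}-\varphi_{1,\varepsilon}|$ and the measures $(\omega_\varepsilon+dd^c\varphi_{i,\varepsilon})^n$ are unaffected by it.

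With this reduction in place I would apply \cite[Theorem 3]{Darvas_2015_AIM} to the K\"ahler form $\omega_\varepsilon+dd^c f_\varepsilon$ and the potentials $\varphi_{i,\varepsilon}-f_\varepsilon\in\mathcal{E}^p$, obtaining a constant $C>0$ with
\[
\tfrac1C\,I_{p,\omega_\varepsilon}(\varphi_{0,\varepsilon},\varphi_{1,\varepsilon})\ \le\ d_{p,\varepsilon}^p(\varphi_{0,\varepsilon},\varphi_{1,\varepsilon})\ \le\ C\,I_{p,\omega_\varepsilon}(\varphi_{0,\varepsilon},\varphi_{1,\varepsilon}).
\]
Then I would let $\varepsilon\to 0$: the middle term converges to $d_p^p(\varphi_0,\varphi_1)$ by Definition \ref{def: dp bounded Lap}, while by Lemma \ref{lem: Ip convergence} both outer terms converge to $I_{p,\theta}(\varphi_0,\varphi_1)=I_p(\varphi_0,\varphi_1)$. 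The claimed double inequality follows at once.

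The one point that genuinely needs care is the uniformity of $C$ in $\varepsilon$: one must know that a single constant works for every K\"ahler representative $\omega_\varepsilon+dd^c f_\varepsilon$, $\varepsilon\in(0,1]$. The comparison constant in \cite[Theorem 3]{Darvas_2015_AIM} depends only on $n$ (and $p$), so it is already the same for all $\varepsilon$; alternatively, should one wish to track a dependence on the volume, it suffices to note that $\vol(\omega_\varepsilon)=V_\varepsilon$ decreases to $\vol(\theta)=1$, so any such dependence stays bounded as $\varepsilon\to 0$. Everything else—the passage from the merely cohomologically-K\"ahler form $\omega_\varepsilon$ to an actual K\"ahler form via the $f_\varepsilon$-translation, and the verification that the approximants $\varphi_{i,\varepsilon}$ are bounded and hence lie in the finite-energy class to which Darvas's theorem applies—is routine bookkeeping already available from Theorem \ref{thm: definition of dp} and its proof.
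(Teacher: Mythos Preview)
Your proposal is correct and follows essentially the same route as the paper: apply \cite[Theorem 3]{Darvas_2015_AIM} at level $\varepsilon$ to obtain the comparison for $\varphi_{i,\varepsilon}$, then pass to the limit using Lemma \ref{lem: Ip convergence} and Definition \ref{def: dp bounded Lap}. You have simply been more explicit than the paper about the $f_\varepsilon$-translation and the uniformity of $C$, both of which the paper treats as implicit in \eqref{distance form} and the dimension-only dependence of Darvas's constant.
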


\begin{proof}
By Darvas \cite[Theorem 3]{Darvas_2015_AIM} we know that
$$\frac{1}{C}I_{p, \omega_\varepsilon}(\varphi_{0,\varepsilon}, \varphi_{1,\varepsilon}) \leq d_{p, \varepsilon}^p(\varphi_{0,\varepsilon}, \varphi_{1,\varepsilon}) \leq C I_{p,\omega_\varepsilon}(\varphi_{0,\varepsilon},\varphi_{1,\varepsilon}).$$
 Letting $\varepsilon$ to zero and using Lemma \ref{lem: Ip convergence} and Definition \ref{def: dp bounded Lap} we get  \eqref{eq: comparision I_p and d_p}. 
\end{proof}

\bigskip

Now, let $\varphi_0, \varphi_1\in \mathcal{E}^p(X, \theta)$. Let $\{f_{i,j}\}$ be a sequence of smooth functions decreasing to $\varphi_i$, $i=0,1$. We then clearly have that $\varphi_{i,j}:=P_\theta(f_{i,j})\in \mathcal{H}_\theta$ and $P_\theta(f_{i,j})\searrow \varphi_i$. 
\begin{lemma}
	\label{lem: limit Ep exists}
	The sequence $d_p(\varphi_{0,j},\varphi_{1,j})$  converges and the limit is independent of the choice of the approximants $f_{i,j}$. 
\end{lemma}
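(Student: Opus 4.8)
The plan is to show that $\bigl(d_p(\varphi_{0,j},\varphi_{1,j})\bigr)_j$ is a Cauchy sequence of real numbers, hence convergent, and that replacing $\{f_{i,j}\}$ by another decreasing sequence of smooth functions with the same limits $\varphi_i$ does not change the limit; both assertions come out of the same estimates. First I would record the elementary facts: since $f_{i,j}$ is smooth, $\varphi_{i,j}=P_\theta(f_{i,j})\in\mathcal{H}_\theta$ has minimal singularities, so $|\varphi_{i,j}-V_\theta|$ is bounded and $\int_X\theta_{\varphi_{i,j}}^n=\vol(\theta)=1$; in particular $\varphi_{i,j}\in\Ec^p(X,\theta)$. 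Moreover $f_{i,j}\searrow\varphi_i$ forces $\varphi_{i,j}\searrow\varphi_i$.

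By the triangle inequality for $d_p$ on $\mathcal{H}_\theta$ (already established), for all $j,k$,
\[
\bigl|d_p(\varphi_{0,j},\varphi_{1,j})-d_p(\varphi_{0,k},\varphi_{1,k})\bigr|\le d_p(\varphi_{0,j},\varphi_{0,k})+d_p(\varphi_{1,j},\varphi_{1,k}),
\]
so it suffices to control $d_p(\varphi_{i,j},\varphi_{i,k})$ for $i=0,1$. By Proposition~\ref{prop: comparison dp and Ip}, $d_p^p(\varphi_{i,j},\varphi_{i,k})\le C\,I_p(\varphi_{i,j},\varphi_{i,k})$, and the quasi-triangle inequality for $I_p$ on $\Ec^p(X,\theta)$ applied with $w=\varphi_i$ gives $I_p(\varphi_{i,j},\varphi_{i,k})\le C(n,p)\bigl(I_p(\varphi_{i,j},\varphi_i)+I_p(\varphi_{i,k},\varphi_i)\bigr)$. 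Thus everything reduces to the claim that $I_p(\varphi_{i,j},\varphi_i)\to 0$ as $j\to\infty$.

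For this claim I would split $I_p(\varphi_{i,j},\varphi_i)$ into $\int_X(\varphi_{i,j}-\varphi_i)^p\,\theta_{\varphi_i}^n$ and $\int_X(\varphi_{i,j}-\varphi_i)^p\,\theta_{\varphi_{i,j}}^n$. The first term goes to $0$ by dominated convergence for the fixed measure $\theta_{\varphi_i}^n$: the integrand decreases to $0$ pointwise and, since $\varphi_i\le\varphi_{i,j}\le\varphi_{i,1}$, it is bounded above by $2^{p-1}\bigl(|\varphi_{i,1}-V_\theta|^p+|V_\theta-\varphi_i|^p\bigr)\in L^1(\theta_{\varphi_i}^n)$ (using $|\varphi_{i,1}-V_\theta|$ bounded and the definition of $\Ec^p(X,\theta)$). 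The second term, in which the Monge-Amp\`ere measure varies with $j$, is the crux: here one uses that $\varphi_{i,j}$ and $\varphi_i$ all have full Monge-Amp\`ere mass, so that $\theta_{\varphi_{i,j}}^n\to\theta_{\varphi_i}^n$ with no loss of mass, together with the uniform energy (integrability) estimates for decreasing sequences in $\Ec^p$ — this monotone $I_p$-convergence ($u_j\searrow u$ in $\Ec^p$ implies $I_p(u_j,u)\to 0$) is known, see \cite{Boucksom_Eyssidieux_Guedj_Zeriahi_2010_AM,DiNezza_Guedj_2016_CM,Darvas_DiNezza_Lu_2018_CM}, and it is the step I expect to be the real obstacle; everything else is formal.

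Granting the claim, $\bigl(d_p(\varphi_{0,j},\varphi_{1,j})\bigr)_j$ is Cauchy, hence converges. For independence of the approximants, let $\varphi_{i,j}=P_\theta(f_{i,j})$ and $\psi_{i,j}=P_\theta(g_{i,j})$ arise from two decreasing sequences of smooth functions with common limits $\varphi_i$; both decrease to $\varphi_i$. Then, exactly as above,
\[
d_p^p(\varphi_{i,j},\psi_{i,j})\le C\,I_p(\varphi_{i,j},\psi_{i,j})\le C'\bigl(I_p(\varphi_{i,j},\varphi_i)+I_p(\psi_{i,j},\varphi_i)\bigr),
\]
and the right-hand side tends to $0$ by the claim; the triangle inequality for $d_p$ on $\mathcal{H}_\theta$ then yields $\lim_j d_p(\varphi_{0,j},\varphi_{1,j})=\lim_j d_p(\psi_{0,j},\psi_{1,j})$, which is the asserted independence.
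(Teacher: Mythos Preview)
Your proof is correct and follows essentially the same route as the paper: triangle inequality for $d_p$ on $\mathcal{H}_\theta$, then Proposition~\ref{prop: comparison dp and Ip} to pass to $I_p$, then the quasi-triangle inequality for $I_p$ and the known monotone convergence $I_p(u_j,u)\to 0$ for $u_j\searrow u$ in $\Ec^p$. The paper is slightly more economical---it cites \cite[Theorem~1.6 and Proposition~1.9]{Guedj_Lu_Zeriahi_2017_JDG} directly for both the quasi-triangle inequality and the monotone $I_p$-convergence, without splitting $I_p$ into its two halves---so your detailed dominated-convergence treatment of the first half is correct but redundant once you invoke the full result for the second half anyway.
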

\begin{proof}
	Set $a_j:=  d_p (\varphi_{0,j}, \varphi_{1,j}) $. By the triangle inequality  and Proposition \ref{prop: comparison dp and Ip} we have
\begin{eqnarray*}
a_j & \leq &  d_p (\varphi_{0,j}, \varphi_{0,k})+  d_p (\varphi_{0,k}, \varphi_{1,k})+  d_p (\varphi_{1,k}, \varphi_{1,j}) \\
&\leq &  a_k +C\left( I_p^{1/p} (\varphi_{0,j}, \varphi_{0,k}) +  I_p^{1/p} (\varphi_{1,j}, \varphi_{1,k}) \right),
\end{eqnarray*}
where $C>0$ depends only on $n,p$. Hence $$|a_j-a_k| \leq C\left( I_p^{1/p} (\varphi_{0,j}, \varphi_{0,k}) +  I_p^{1/p} (\varphi_{1,j}, \varphi_{1,k}) \right).$$
By \cite[Theorem 1.6 and Proposition 1.9]{Guedj_Lu_Zeriahi_2017_JDG}, it then follows that $|a_j-a_k|\rightarrow 0$ as $j,k\rightarrow +\infty$. This proves that the sequence $d_p(\varphi_{0,j},\varphi_{1,j})$ is Cauchy, hence it converges.

Let $\tilde{\varphi}_{i,j}= P_\theta(\tilde{f}_{i,j})$  be another sequence in $\mathcal{H}_\theta$ decreasing to $\varphi_i$, $i=0,1$. Then applying the triangle inequality several times we get
$$ 
d_p (\varphi_{0,j}, \varphi_{1,j}) \leq  d_p (\varphi_{0,j}, \tilde{\varphi}_{0,j}) + d_p (\tilde{\varphi}_{0,j}, \tilde{\varphi}_{1,j}) + d_p (\tilde{\varphi}_{1,j}, \varphi_{1,j}), 
$$
and thus
$$| d_p (\varphi_{0,j}, \varphi_{1,j}) -d_p (\tilde{\varphi}_{0,j}, \tilde{\varphi}_{1,j})|   \leq C\left( I_p^{1/p} (\varphi_{0,j}, \tilde{\varphi}_{0,j}) +  I_p^{1/p} (\varphi_{1,j}, \tilde{\varphi}_{1,j}) \right).$$
It then follows again from \cite[Theorem 1.6 and Proposition 1.9]{Guedj_Lu_Zeriahi_2017_JDG} that the limit does not depend on the choice of the approximants.
\end{proof}

Given $\varphi_0, \varphi_1 \in \mathcal{E}^p(X, \theta)$, we then define
\begin{equation*}
d_p(	\varphi_0,\varphi_1) := \lim_{j\rightarrow +\infty}\, d_p (P_\theta(f_{0,j}), P_\theta(f_{1,j})).
\end{equation*}

\begin{prop}\label{prop: inherit} $d_p$ is a distance on $\mathcal{E}^p(X, \theta)$ and the inequalities comparing $d_p$ and $I_p$ on $\mathcal{H}_\theta$ \eqref{eq: comparision I_p and d_p} hold on $\mathcal{E}^p(X, \theta)$. Moreover, if $u_j\in \Ec^p(X,\theta)$ decreases to $u\in \Ec^p(X,\theta)$ then $d_p(u_j,u)\to 0$.  
\end{prop}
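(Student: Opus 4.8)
The plan is to establish the three assertions of Proposition \ref{prop: inherit} in turn, using the comparison \eqref{eq: comparision I_p and d_p} on $\mathcal H_\theta$ together with the quasi-triangle inequality for $I_p$ as the main workhorse, since these two facts transfer essentially all the metric structure from $\mathcal H_\theta$ to $\Ec^p(X,\theta)$ through the defining limit.

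\textbf{Triangle inequality and symmetry.} First I would note that symmetry of $d_p$ on $\Ec^p(X,\theta)$ is immediate from symmetry on $\mathcal H_\theta$ and the definition via decreasing approximants. For the triangle inequality, given $\varphi_0,\varphi_1,\varphi_2\in\Ec^p(X,\theta)$, I would pick for each a sequence $f_{i,j}\searrow\varphi_i$ of smooth functions, so that $\varphi_{i,j}:=P_\theta(f_{i,j})\in\mathcal H_\theta$ decrease to $\varphi_i$. The triangle inequality holds for $d_p$ on $\mathcal H_\theta$ (already proved), so $d_p(\varphi_{0,j},\varphi_{2,j})\le d_p(\varphi_{0,j},\varphi_{1,j})+d_p(\varphi_{1,j},\varphi_{2,j})$; passing to the limit $j\to\infty$ and invoking Lemma \ref{lem: limit Ep exists} (which guarantees each term converges to the corresponding $d_p$ on $\Ec^p$) gives the triangle inequality on $\Ec^p(X,\theta)$.

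\textbf{The comparison \eqref{eq: comparision I_p and d_p}.} For $\varphi_0,\varphi_1\in\Ec^p(X,\theta)$ with approximants $\varphi_{i,j}=P_\theta(f_{i,j})\searrow\varphi_i$, Proposition \ref{prop: comparison dp and Ip} gives $\tfrac1C I_p(\varphi_{0,j},\varphi_{1,j})\le d_p^p(\varphi_{0,j},\varphi_{1,j})\le C\,I_p(\varphi_{0,j},\varphi_{1,j})$. The left-hand sides converge to $d_p^p(\varphi_0,\varphi_1)$ by definition, so it suffices to know $I_p(\varphi_{0,j},\varphi_{1,j})\to I_p(\varphi_0,\varphi_1)$. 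This is exactly the content of the convergence results in \cite[Theorem 1.6 and Proposition 1.9]{Guedj_Lu_Zeriahi_2017_JDG} used already in Lemma \ref{lem: limit Ep exists}: decreasing $\Ec^p$-convergence of the arguments forces $I_p$-convergence (combined with the quasi-triangle inequality to handle the two arguments simultaneously). Hence \eqref{eq: comparision I_p and d_p} holds on $\Ec^p(X,\theta)$.

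\textbf{Non-degeneracy and decreasing continuity.} Non-degeneracy is then immediate: if $d_p(\varphi_0,\varphi_1)=0$ then by the comparison $I_p(\varphi_0,\varphi_1)=0$, and the domination principle \cite[Proposition 2.4]{Darvas_DiNezza_Lu_2018_CM} forces $\varphi_0=\varphi_1$. Finally, for the last statement, suppose $u_j\in\Ec^p(X,\theta)$ decreases to $u\in\Ec^p(X,\theta)$. By the comparison \eqref{eq: comparision I_p and d_p} on $\Ec^p$ we have $d_p^p(u_j,u)\le C\,I_p(u_j,u)$, and $I_p(u_j,u)\to 0$ by \cite[Theorem 1.6 and Proposition 1.9]{Guedj_Lu_Zeriahi_2017_JDG} applied to the decreasing sequence $u_j\searrow u$ in $\Ec^p(X,\theta)$, so $d_p(u_j,u)\to 0$. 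The step I expect to require the most care is confirming that the $I_p$-convergence results of \cite{Guedj_Lu_Zeriahi_2017_JDG} apply in the generality needed here — namely that decreasing convergence within $\Ec^p(X,\theta)$ of one or both arguments yields $I_p$-convergence — and organizing the use of the quasi-triangle inequality so that moving both arguments at once (as in the triangle-inequality argument and in the first limit) is legitimate; once that is in hand the rest is a direct transfer from $\mathcal H_\theta$.
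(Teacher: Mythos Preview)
Your proposal is correct and follows the same approach as the paper's own proof, which is extremely terse: the paper simply states that the comparison \eqref{eq: comparision I_p and d_p} passes to the limit ``by definition of $d_p$'', deduces non-degeneracy from the domination principle, and obtains the last statement from \eqref{eq: comparision I_p and d_p} together with \cite[Proposition 1.9]{Guedj_Lu_Zeriahi_2017_JDG}. You have supplied the details the paper omits --- in particular the triangle inequality (which the paper does not even mention) and the precise mechanism, via the quasi-triangle inequality for $I_p$ and \cite[Proposition 1.9]{Guedj_Lu_Zeriahi_2017_JDG}, by which $I_p(\varphi_{0,j},\varphi_{1,j})$ is controlled in the limit; your identification of this $I_p$-convergence as the point requiring care is exactly right.
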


\begin{proof}
By definition of $d_p$ on $\mathcal{E}^p(X,\theta)$ we infer that the comparison between $d_p$ and $I_p$ in Proposition \ref{prop: comparison dp and Ip} holds on $\Ec^p(X,\theta)$. From this and the domination principle \cite{Darvas_DiNezza_Lu_2018_CM} we deduce that $d_p$ is non-degenerate. The last statement follows from \eqref{eq: comparision I_p and d_p} and \cite[Proposition 1.9]{Guedj_Lu_Zeriahi_2017_JDG}.
\end{proof}

The next result was proved in \cite[Lemma 3.4]{Berman_Darvas_Lu_2016_Minimizer} for the K\"ahler case. 
\begin{lemma}
	\label{lem: distance formula one smooth endpoint}
	Let $u_t$ be the Mabuchi geodesic joining $u_0\in \mathcal{H}_{\theta}$ and $u_1\in \Ec^p(X,\theta)$. Then 
	$$
	d_p^p(u_0,u_1) = \int_X |\dot{u}_0|^p (\theta+dd^c u_0)^n.
	$$
\end{lemma}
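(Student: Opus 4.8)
The plan is to approximate $u_1$ from above by potentials in $\mathcal{H}_{\theta}$, where Theorem \ref{thm: definition of dp} already provides the desired formula, and then pass to the limit, in the spirit of the K\"ahler case \cite[Lemma 3.4]{Berman_Darvas_Lu_2016_Minimizer}. First I would fix smooth functions $f_{1,j}$ decreasing to $u_1$ and set $u_{1,j}:=P_{\theta}(f_{1,j})\in\mathcal{H}_{\theta}$; from $u_1\le u_{1,j}\le f_{1,j}$ one gets $u_{1,j}\searrow u_1$, and in particular $u_{1,j}\in\mathcal{E}^p(X,\theta)$. Let $u^{(j)}_t$ denote the weak Mabuchi geodesic joining $u_0$ and $u_{1,j}$. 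By Theorem \ref{thm: definition of dp} together with Definition \ref{def: dp bounded Lap} we have $d_p^p(u_0,u_{1,j})=\int_X|\dot u^{(j)}_0|^p(\theta+dd^c u_0)^n$ (the $\mathcal{E}^p$-distance $d_p$ restricting on $\mathcal{H}_{\theta}$ to the one of Definition \ref{def: dp bounded Lap}), while the triangle inequality on $\mathcal{E}^p$ and Proposition \ref{prop: inherit} give $|d_p(u_0,u_{1,j})-d_p(u_0,u_1)|\le d_p(u_{1,j},u_1)\to 0$. Hence it suffices to prove that $\int_X|\dot u^{(j)}_0|^p(\theta+dd^c u_0)^n\to\int_X|\dot u_0|^p(\theta+dd^c u_0)^n$.

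Next I would establish the pointwise, decreasing convergence $\dot u^{(j)}_0\to\dot u_0$. As $u_{1,j}$ decreases, the sets $\mathcal{S}_{[0,1]}(u_0,u_{1,j})$ are nested decreasing, so $u^{(j)}_t$ decreases to a subgeodesic $w_t$; one has $w_t\ge u_t$ (since $\mathcal{S}_{[0,1]}(u_0,u_1)\subseteq\mathcal{S}_{[0,1]}(u_0,u_{1,j})$, in particular $w$ is not identically $-\infty$) and $w_t\le u_t$ (since $w\in\mathcal{S}_{[0,1]}(u_0,u_1)$), whence $w_t=u_t$ pointwise. Fix $x$ with $u_0(x)>-\infty$. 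By convexity of the weak geodesics in $t$ (Section \ref{sec geo}) one has $\dot u^{(j)}_0(x)=\inf_{0<t\le1}\frac{u^{(j)}_t(x)-u_0(x)}{t}$ and $\dot u_0(x)=\inf_{0<t\le1}\frac{u_t(x)-u_0(x)}{t}$; since $\frac{u^{(j)}_t(x)-u_0(x)}{t}$ decreases in $j$ to $\frac{u_t(x)-u_0(x)}{t}$ for each $t$, a routine interchange of the two monotone infima, as in the proof of Lemma \ref{lem: time derivative converge}, gives $\dot u^{(j)}_0(x)\searrow\dot u_0(x)$. Because $(\theta+dd^c u_0)^n$ does not charge the pluripolar set $\{u_0=-\infty\}$, this convergence holds $(\theta+dd^c u_0)^n$-almost everywhere.

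I would then conclude with Fatou's lemma and dominated convergence. Fatou gives $\int_X|\dot u_0|^p(\theta+dd^c u_0)^n\le\liminf_j\int_X|\dot u^{(j)}_0|^p(\theta+dd^c u_0)^n=d_p^p(u_0,u_1)<+\infty$, so $|\dot u_0|^p$ is $(\theta+dd^c u_0)^n$-integrable. From the monotonicity in $j$ we have $\dot u_0\le\dot u^{(j)}_0\le\dot u^{(1)}_0$, hence $|\dot u^{(j)}_0|^p\le|\dot u^{(1)}_0|^p+|\dot u_0|^p$, a fixed $(\theta+dd^c u_0)^n$-integrable dominating function (of total mass at most $d_p^p(u_0,u_{1,1})+d_p^p(u_0,u_1)$). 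Dominated convergence then yields the convergence of integrals, and together with the first step this gives $d_p^p(u_0,u_1)=\int_X|\dot u_0|^p(\theta+dd^c u_0)^n$.

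The main obstacle I anticipate is the second step: extracting from the monotonicity $u^{(j)}_t\searrow u_t$ of these generally unbounded, low-regularity weak geodesics the pointwise convergence of the right-hand time derivatives $\dot u^{(j)}_0\to\dot u_0$ (including the handling of the pluripolar locus where $u_0=-\infty$). A secondary point is that $\int_X|\dot u_0|^p(\theta+dd^c u_0)^n$ is not known a priori to be finite, which is why Fatou's lemma must be applied first, both to establish that finiteness and to produce the integrable dominating function needed for dominated convergence.
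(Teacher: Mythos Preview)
Your proof is correct and follows the same overall strategy as the paper: approximate $u_1$ from above by potentials $u_{1,j}\in\mathcal{H}_\theta$, invoke Theorem~\ref{thm: definition of dp} for the approximants, establish the pointwise decreasing convergence $\dot u_0^{(j)}\searrow\dot u_0$ via the monotonicity of the geodesics and convexity in $t$, and pass to the limit by dominated convergence.

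The one genuine difference lies in how the dominating function is produced. The paper first treats the ordered case $u_0\ge u_1+1$, where $\dot u_0^{(j)}\le 0$ so that $(-\dot u_0^{(j)})^p$ is monotone and the monotone convergence theorem applies directly; for general $u_0,u_1$ it then introduces an auxiliary geodesic $w_t$ from $u_0$ to $u_1-C-1$ (so that the ordered case applies to $w$) and bounds $|\dot u_0^{(j)}|^p\le C_2+2^{p-1}|\dot w_0|^p$, the right-hand side being integrable by the first step. You instead bypass the case split: from $\dot u_0\le\dot u_0^{(j)}\le\dot u_0^{(1)}$ you get $|\dot u_0^{(j)}|^p\le|\dot u_0|^p+|\dot u_0^{(1)}|^p$; the second summand is integrable since both endpoints of $u^{(1)}$ lie in $\mathcal{H}_\theta$, and the first is made integrable by a preliminary application of Fatou. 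This is a tidy simplification---no auxiliary geodesic, no preliminary ordered case---at the cost of the extra Fatou step. Both routes rely on the absolute continuity of $\theta_{u_0}^n$ (Theorem~\ref{thm: regularity of psh envelope}) to discard the exceptional set where the pointwise convergence may fail; the paper phrases this via $\{u_1=-\infty\}$ having Lebesgue measure zero, while you phrase it via $\{u_0=-\infty\}$ being pluripolar, which amounts to the same thing here.
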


\begin{proof}
	We first assume that $u_0\geq u_1+1$. We approximate $u_1$  from above by $u_1^j\in \mathcal{H}_{\theta}$ such that $u_1^j\leq u_0$, for all $j$. Let $u_{t}^j$ be the Mabuchi geodesic joining $u_0$ to $u_1^j$.  Note that $u_t^j\geq u_t$ and that $\dot{u}_t^j\leq 0$. By Theorem \ref{thm: definition of dp},
$$ 
d_p^p(u_0,u_1^j) = \int_X (-\dot{u}_{0}^j)^p \theta_{u_0}^n.
$$
Also, $\dot{u}_{0}^j$ decreases to $\dot{u}_0$, hence the monotone convergence theorem and Proposition \ref{prop: inherit} give 
$$
d_p^p(u_0,u_1) = \int_X (- \dot{u}_0)^p \theta_{u_0}^n <+\infty.
$$
In particular $|\dot{u}_0|^p \in L^1(X, \theta_{u_0}^n)$. 

For the general case we can find  a constant $C>0$ such that $u_1\leq u_0+C$ since $u_0$ has minimal singularities. Let $w_t$ be the Mabuchi geodesic joining $u_0$ and $u_1-C-1$. Note that $w_t\leq u_t^j$ since $w_1=u_1-C-1< u_1\leq  u_1^j$ and $w_0=u_0=u_{0}^j$ and $\dot{w}_t\leq 0$. It then follows that 
$$
\dot{w}_0 \leq \dot{u}_0^j \leq u_1^j-u_0 \leq (u_1^j-V_{\theta}) + (V_{\theta}-u_0)\leq   \sup_X u_1^j  + \sup_{X} (V_{\theta}-u_0)\leq C_1,
$$
for a uniform constant $C_1>0$. In the second inequality above we use the fact that the Mabuchi geodesic $u_t^j$ connecting $u_0$ to $u_1^j$ is convex in $t$, while in the last inequality we use the fact that $u_0$ has minimal singularities. 

The previous inequalities then yield $|\dot{u}_0^j|^p \leq C_2 + 2^{p-1}|\dot{w}_0|^p$, where $C_2$ is a uniform constant. On the other hand by Theorem \ref{thm: definition of dp} we have
$$ 
d_p^p(u_0,u_1^j) = \int_X |\dot{u}_{0}^j|^p \theta_{u_0}^n.
$$
We claim that $|\dot{u}_0^j|^p$ converges a.e. to $|\dot{u}_0|^p$. Indeed, the convergence is pointwise at points $x$ such that $u_1(x)>-\infty$; but the set $\{u_1=-\infty\}$ has Lebesgue measure zero. Also, the above estimate ensures that $|\dot{u}_0^j|^p$ are uniformly bounded by $2^{p-1}(-\dot{w}_0)^p+C_2$ which is integrable w.r.t. the measure $\theta_{u_0}^n$ since $\int_X |\dot{w}_0|^p \theta_{u_0}^n= d_p^p(u_0,u_1-C-1)<+\infty$. Proposition \ref{prop: inherit} and Lebesgue's dominated convergence theorem then give the result.
\end{proof}

\begin{prop} \label{prop: diamond inequality and Pythagorean formula}
If $u,v\in\mathcal{E}^p(X, \theta)$ then
\begin{itemize}
	\item[(i)] $d_p^p (u,v) =d_p^p(u,P_{\theta}(u,v)) +d_p^p (v,P_{\theta}(u,v))$ and 
	\item[(ii)] $d_p(u, \max(u,v)) \geq d_p(v, P_{\theta}(u,v))$. 
\end{itemize}
\end{prop}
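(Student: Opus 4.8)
The plan is to bootstrap both assertions from the K\"ahler case through the $\omega_\varepsilon$-approximation of Section~\ref{section:  distance dp} and then to pass from $\mathcal{H}_\theta$ to $\Ec^p(X,\theta)$ by decreasing approximation. First I would prove (i) for $u=P_\theta(f),v=P_\theta(g)\in\mathcal{H}_\theta$. Setting $h:=\min(f,g)$, note that $-h=\max(-f,-g)$ is $C\omega$-psh, so $P_\theta(h)\in\mathcal{H}_\theta$ as in Lemma~\ref{lem: min}, and comparing the families defining these envelopes one checks that $P_\theta(u,v)=P_\theta(h)$ and, for every $\varepsilon>0$, $P_{\omega_\varepsilon}(u_\varepsilon,v_\varepsilon)=P_{\omega_\varepsilon}(h)$, where $u_\varepsilon=P_{\omega_\varepsilon}(f)$, $v_\varepsilon=P_{\omega_\varepsilon}(g)$. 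Since $\{\omega_\varepsilon\}$ is K\"ahler, Lemma~\ref{lem: Pythagorean Darvas} yields the Pythagorean identity for $d_{p,\varepsilon}$ among $u_\varepsilon$, $v_\varepsilon$ and $P_{\omega_\varepsilon}(h)$; letting $\varepsilon\to0$, each term converges to its $\theta$-counterpart by Theorem~\ref{thm: definition of dp} and Definition~\ref{def: dp bounded Lap}, which proves (i) on $\mathcal{H}_\theta$.

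To reach $\Ec^p(X,\theta)$ I would pick smooth $f_{0,j}\searrow u$, $f_{1,j}\searrow v$, set $u_j:=P_\theta(f_{0,j})$, $v_j:=P_\theta(f_{1,j})\in\mathcal{H}_\theta$, and observe that $P_\theta(u_j,v_j)=P_\theta(\min(f_{0,j},f_{1,j}))\in\mathcal{H}_\theta$ decreases to $P_\theta(u,v)$ while $\max(u_j,v_j)\searrow\max(u,v)$. As $P_\theta(u,v)\in\Ec^p(X,\theta)$ by \cite[Theorem 2.13]{Darvas_DiNezza_Lu_2018_CM} and $\max(u,v)\in\Ec^p(X,\theta)$ by monotonicity of the energy classes and locality of the non-pluripolar Monge-Amp\`ere operator, the continuity of $d_p$ along decreasing sequences (Proposition~\ref{prop: inherit}) together with the independence of the defining approximants (Lemma~\ref{lem: limit Ep exists}) lets me pass to the limit $j\to\infty$ in the identity of the previous paragraph, proving (i). The same decreasing approximation reduces (ii) to the case $u,v\in\mathcal{H}_\theta$.

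For (ii) with $u,v\in\mathcal{H}_\theta$ I would invoke the K\"ahler analogue of (ii) (Darvas~\cite{Darvas_2015_AIM}) in the class $\{\omega_\varepsilon\}$, namely $d_{p,\varepsilon}(u_\varepsilon,\max(u_\varepsilon,v_\varepsilon))\ge d_{p,\varepsilon}(v_\varepsilon,P_{\omega_\varepsilon}(u_\varepsilon,v_\varepsilon))$, and then let $\varepsilon\to0$. The right-hand side tends to $d_p(v,P_\theta(u,v))$ by Theorem~\ref{thm: definition of dp}, since $v_\varepsilon$ and $P_{\omega_\varepsilon}(u_\varepsilon,v_\varepsilon)=P_{\omega_\varepsilon}(\min(f,g))$ are of envelope type. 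The main obstacle is the left-hand side: $\max(u_\varepsilon,v_\varepsilon)$ is not an envelope, so Theorem~\ref{thm: definition of dp} does not apply verbatim. I expect the fix to be a rereading of its proof, which uses only that the \emph{first} endpoint $u_\varepsilon$ is an envelope --- so that Theorem~\ref{thm: regularity of psh envelope} identifies $\theta_u^n=\mathbf 1_{\{u=f\}}(\theta+dd^c f)^n$ and, analogously, $(\omega_\varepsilon+dd^c u_\varepsilon)^n=\mathbf 1_{\{u_\varepsilon=f\}}(\omega_\varepsilon+dd^c f)^n$ --- whereas for the second endpoint one only needs minimal singularities (which $\max(u_\varepsilon,v_\varepsilon)$ has, so that the $\omega_\varepsilon$-geodesic is uniformly Lipschitz by the estimate recalled in Section~\ref{sec geo}, with $\sup_X|u_\varepsilon-\max(u_\varepsilon,v_\varepsilon)|\le\sup_X|f-g|$) and the decreasing convergence $\max(u_\varepsilon,v_\varepsilon)\searrow\max(u,v)$ for the squeeze argument of Lemma~\ref{lem: time derivative converge}. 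Running that argument gives $d_{p,\varepsilon}(u_\varepsilon,\max(u_\varepsilon,v_\varepsilon))\to d_p(u,\max(u,v))$, and passing to the limit completes (ii). A more self-contained but heavier route, avoiding $\omega_\varepsilon$, would bound $d_p(v,P_\theta(u,v))$ by the length of the competitor curve $t\mapsto P_\theta(v,\gamma_{1-t})$ built from the $\theta$-geodesic $\gamma$ joining $u$ and $\max(u,v)$; the crux would then be a contact-set orthogonality analysis of the non-pluripolar Monge-Amp\`ere measure showing this curve is no longer than $\gamma$.
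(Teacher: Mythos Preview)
Your treatment of (i) is exactly the paper's: first on $\mathcal{H}_\theta$ via the identity $P_{\omega_\varepsilon}(u_\varepsilon,v_\varepsilon)=P_{\omega_\varepsilon}(\min(f,g))$, Lemma~\ref{lem: Pythagorean Darvas} and Theorem~\ref{thm: definition of dp}, then to $\Ec^p(X,\theta)$ by monotone approximation and Proposition~\ref{prop: inherit}.

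For (ii) the paper takes a genuinely different route. It does \emph{not} invoke a K\"ahler inequality and then let $\varepsilon\to 0$; it works directly in $\mathcal{H}_\theta$ using Lemma~\ref{lem: distance formula one smooth endpoint} (the identity $d_p^p(u_0,u_1)=\int_X|\dot u_0|^p\theta_{u_0}^n$ when only $u_0\in\mathcal{H}_\theta$). Applying it to the geodesic $u_t$ from $u$ to $\max(u,v)$ and to the geodesic $\varphi_t$ from $P_\theta(u,v)$ to $v$, the paper compares the integrands pointwise on the contact set: since $\varphi_t\le u_t$ with $\varphi_0=u_0$ on $\{P(u,v)=u\}$ one has $\dot\varphi_0\le\dot u_0$ there, while $0\le\dot\varphi_0\le v-P(u,v)=0$ on $\{P(u,v)=v\}$; Theorem~\ref{thm: regularity of psh envelope} then localizes $\theta_{P(u,v)}^n$ to these contact sets and the inequality follows by integration. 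Your alternative ``heavier route'' gestures toward a contact-set analysis but through a competitor curve; the paper's argument is shorter and needs no length comparison, only the one-smooth-endpoint formula.

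Your primary route for (ii) has two soft spots. First, the inequality $d_{p,\varepsilon}(u_\varepsilon,\max(u_\varepsilon,v_\varepsilon))\ge d_{p,\varepsilon}(v_\varepsilon,P_{\omega_\varepsilon}(u_\varepsilon,v_\varepsilon))$ is not, to my knowledge, recorded in \cite{Darvas_2015_AIM}; the paper itself only attributes the case $p=1$ to \cite{Darvas_Dinezza_Lu_2018_L1}. You would have to prove it, and the natural proof is precisely the contact-set argument above---at which point the $\omega_\varepsilon$-detour is superfluous. Second, even granting that K\"ahler inequality, your limit $d_{p,\varepsilon}(u_\varepsilon,\max(u_\varepsilon,v_\varepsilon))\to d_p(u,\max(u,v))$ cannot literally be obtained by rereading the proof of Theorem~\ref{thm: definition of dp}: that proof starts from \eqref{eq: dp formula}, which (as cited from \cite[Lemma 4.11]{Darvas_2015_AIM}) requires \emph{both} endpoints to have Laplacian bounded above, whereas $\max(u_\varepsilon,v_\varepsilon)$ typically does not (the max of two $C^{1,1}$ functions carries a singular measure along the coincidence set). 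The correct input is the K\"ahler one-smooth-endpoint formula \cite[Lemma 3.4]{Berman_Darvas_Lu_2016_Minimizer}, i.e.\ exactly the analogue of Lemma~\ref{lem: distance formula one smooth endpoint}; once you have that, you are essentially running the paper's proof inside each $\{\omega_\varepsilon\}$ and then taking a redundant limit.
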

We recall that from \cite[Theorem 2.13]{Darvas_DiNezza_Lu_2018_CM} $P_\theta(u,v)\in \Ec^p(X, \theta)$.  
The identity in the first statement is known as the Pythagorean formula and it was established in the K\"ahler case by Darvas \cite{Darvas_2015_AIM}. The second statement was proved for $p=1$ in \cite{Darvas_Dinezza_Lu_2018_L1} using the differentiability of the Monge-Amp\`ere energy. As will be shown in Proposition \ref{prop: approximation vs MA} our definition of $d_1$ and the one in \cite{Darvas_Dinezza_Lu_2018_L1} do coincide.
\begin{proof}
To prove the Pythagorean formula we first assume that $u=P_{\theta}(f),v=P_{\theta}(g)\in \Htheta$. Set $u_{\varepsilon}:=P_{\omega_{\varepsilon}}(f)$, $v_{\varepsilon}:=P_{\omega_{\varepsilon}}(g)$. It follows from Lemma \ref{lem: Pythagorean Darvas} that 
\begin{eqnarray*}
d_{p,\varepsilon}^p (u_{\varepsilon}, v_{\varepsilon})& =&  d_{p,\varepsilon}^p (u_{\varepsilon}, P_{\omega_{\varepsilon}}(u_{\varepsilon},v_{\varepsilon})) + d_{p,\varepsilon}^p (v_{\varepsilon}, P_{\omega_{\varepsilon}}(u_{\varepsilon},v_{\varepsilon}))\\
&=& d_{p,\varepsilon}^p (u_{\varepsilon}, P_{\omega_{\varepsilon}}(\min(f,g)) + d_{p,\varepsilon}^p (v_{\varepsilon}, P_{\omega_{\varepsilon}}(\min(f,g)),
\end{eqnarray*}
where in the last identity we use that $P_{\omega_{\varepsilon}}(u_{\varepsilon},v_{\varepsilon})=P_{\omega_{\varepsilon}}(\min(f,g))$. It follows from Lemma \ref{lem: min} that $dd^c \min(f,g) \leq C\omega$. Applying Theorem \ref{thm: definition of dp} we obtain $(i)$ for this case. To treat the general case, let $u_j=P_{\theta}(f_j),v_j=P_{\theta}(g_j)$ be sequences in $\mathcal{H}_{\theta}$ decreasing to $u,v$. By Lemma \ref{lem: min}, $P_{\theta}(u_j,v_j)=P_{\theta}(\min(f_j,g_j))\in \Htheta$ and it decreases to $P_{\theta}(u,v)$. Then $(i)$ follows from the first step and Proposition \ref{prop: inherit} since 
$$|d_p(u_j, v_j)  - d_p(u,v)| \leq d_p(u_j, u) +d_p(v, v_j). $$

To prove the second statement, in view of Proposition \ref{prop: inherit}, we can assume that $u=P_{\theta}(f), v=P_{\theta}(g) \in \mathcal{H}_{\theta}$. By Lemma \ref{lem: distance formula one smooth endpoint} we have
$$
d_p^p(u,\max(u,v)) = \int_X |\dot{u}_0|^p \theta_u^n,
$$
where $t\mapsto u_t$ is the Mabuchi geodesic joining $u_0=u$ to $u_1=\max(u,v)$.

Let $\varphi_t$ be the Mabuchi geodesic joining $\varphi_0=P_{\theta}(u,v)$ to $\varphi_1=v$. We note that $0\leq \dot{\varphi}_0\leq v-P(u,v)$. Indeed $\dot{\varphi}_0\geq 0$ since $\varphi_0\leq \varphi_1$ while the second inequality follows from the convexity in $t$ of the geodesic. Using this observation and the fact that $\varphi_t\leq u_t$ we obtain
 $$
 \id_{\{P(u,v)=u\}} \dot{\varphi}_0 \leq \id_{\{P(u,v)=u\}} \dot{u}_0, \; \text{and}\;  \id_{\{P(u,v)=v\}} \dot{\varphi}_0 =0.
 $$
 Since $P_{\theta}(u,v)=P_{\theta}(\min(f,g))$ with $dd^c \min (f,g) \leq C \omega$, Theorem \ref{thm: regularity of psh envelope}, Theorem \ref{thm: definition of dp}, and \cite[Lemma 4.1]{Guedj_Lu_Zeriahi_2017_JDG} then yield
 \begin{flalign*}
 	d_p^p(P_{\theta}(u,v),v)& = \int_X \dot{\varphi}_0^p (\theta+ dd^c \varphi_0)^n \leq  \int_{\{P(u,v)=u\}} \dot{\varphi}_0^p (\theta+ dd^c u)^n\\
 	& \leq  \int_{\{P(u,v)=u\}} \dot{u}_0^p (\theta+ dd^c u)^n \leq d_p^p(u,\max(u,v)). 
 \end{flalign*}
\end{proof}

\begin{remark}
By Proposition \ref{prop: diamond inequality and Pythagorean formula} we have a ``Pythagorean inequality'' for $\max$: 
$$
d_p^p(u,\max(u,v)) + d_p^p(v,\max(u,v)) \geq d_p^p(u,v), \ \forall u,v\in \Ec^p(X,\theta). 
$$
\end{remark}

\subsection{Completeness of $(\mathcal{E}^p(X,\theta),d_p)$}
In the sequel we fix a smooth volume form $dV$ on $X$ such that $\int_X dV=1$.
\begin{lemma}\label{control sup}
Let $u\in \mathcal{E}^p(X, \theta)$ and let $\phi$ be a $\theta$-psh function with minimal singularities, $\sup_X \phi=0$ satisfying $\theta_\phi^n=dV$. Then there exist uniform constants $C_1=C_1(n,\theta)$ and  $C_2=C_2(n)>0$  such that
$$|\sup_X u |\leq C_1+C_2 d_p(u, \phi).$$

\end{lemma}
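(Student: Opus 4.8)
The plan is to compare $u$ with the special potential $\phi$ via the $d_p$-geometry and translate this into a bound on $\sup_X u$. First I would relate $\sup_X u$ to an $I_p$-type quantity. Since $\int_X \theta_\phi^n = \mathrm{Vol}(\theta) = 1$ and $\theta_\phi^n = dV$, I can write, using $\sup_X u = \sup_X u - \sup_X \phi$ and basic pluripotential estimates,
\begin{flalign*}
|\sup_X u| \leq C + \left| \int_X (u - \phi)\, \theta_\phi^n \right| \leq C + \int_X |u - \phi|\, \theta_\phi^n,
\end{flalign*}
where the first inequality is a standard consequence of the fact that a $\theta$-psh function differs from its supremum by a bounded-in-$L^1$ amount (quasi-psh functions are uniformly integrable, and $\phi$ has minimal singularities with $\sup_X\phi=0$). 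By Hölder's inequality the last term is $\leq \left( \int_X |u-\phi|^p\, \theta_\phi^n \right)^{1/p}$, which in turn is bounded by $I_p(u,\phi)^{1/p}$.

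Next I would invoke Proposition~\ref{prop: inherit}, which extends the comparison \eqref{eq: comparision I_p and d_p} to all of $\Ec^p(X,\theta)$: there is a constant $C=C(n)$ with $I_p(u,\phi) \leq C\, d_p^p(u,\phi)$. Combining with the previous paragraph gives
\begin{flalign*}
|\sup_X u| \leq C_1 + C_2\, d_p(u,\phi),
\end{flalign*}
with $C_1$ depending on $n$ and $\theta$ (through the uniform $L^1$ bound for $\theta_\phi^n$-integrals of $\theta$-psh functions normalized by their sup, and through $\mathrm{Vol}(\theta)$) and $C_2$ depending only on $n,p$. One should check that $\phi$ as in the statement exists and has the claimed properties; this follows from Theorem~\ref{thm: regularity of psh envelope} applied to solve $\theta_\phi^n = dV$ on $\Amp(\theta)$ together with the normalization, or directly from the Calabi–Yau type theorem in big and nef classes (e.g.\ \cite{Boucksom_Eyssidieux_Guedj_Zeriahi_2010_AM}).

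The main obstacle I anticipate is the first inequality: controlling $|\sup_X u|$ by $\int_X |u-\phi|\,\theta_\phi^n$. The difficulty is that $u$ and $\phi$ are both singular, so one cannot directly quote the classical compactness of $\sup$-normalized $\omega$-psh functions; instead one argues on the ample locus, or uses that $\phi - V_\theta$ and $u - V_\theta$ are both controlled and that $\theta_\phi^n = dV$ is a fixed smooth volume form, so the linear functional $w \mapsto \int_X (w - \sup_X w)\, dV$ is bounded below on $\PSH(X,\theta)$ by a constant depending only on $n,\theta$ (Skoda-type / Hartogs-type estimate). Once this is in place, the rest is a routine chain of Hölder and Proposition~\ref{prop: inherit}. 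A minor point to be careful about is the direction of the inequality and the sign of $\sup_X u$ (which may be positive or negative), handled by applying the estimate to both $u$ and by noting $u \leq \sup_X u$ pointwise on the support of $dV$.
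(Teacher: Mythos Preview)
Your proposal is correct and follows essentially the same approach as the paper: relate $|\sup_X u|$ to $\int_X |u-\phi|\,dV$ via the standard Hartogs/compactness estimate for quasi-psh functions (the paper cites \cite[Proposition 2.7]{Guedj_Zeriahi_2005_JGA} for this, which dispels your worry about working on the ample locus --- $dV$ is smooth on all of $X$), then use $dV=\theta_\phi^n$, H\"older, and the $I_p$--$d_p$ comparison from Proposition~\ref{prop: inherit}. The only cosmetic difference is that the paper routes through $\|u\|_{L^p(dV)} \leq \|u-\phi\|_{L^p(dV)} + \|\phi\|_{L^p(dV)}$ rather than going to $\int_X|u-\phi|\,dV$ directly, but this is the same argument.
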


\begin{proof}
Using the H\"older inequality and  \cite[Proposition 2.7]{Guedj_Zeriahi_2005_JGA})  we obtain
\begin{eqnarray*}
|\sup_X u| &\leq &  \int_X |u-\sup_X u| dV +\int_X |u| dV \leq  A+ \left(  \int_X |u|^p dV \right)^{1/p}\\
&\leq &  A+ \left( \|u-\phi\|_{L^p(dV)}+\|\phi\|_{L^p(dV)}  \right).
\end{eqnarray*}
By Proposition  \ref{prop: inherit},
$$
\int_X |u-\phi|^p dV = \int_X |u-\phi|^p \theta_\phi^n \leq  I_p (u, \phi)\leq C(n) d_p^p (u, \phi).
$$
Combining the above inequalities we get the conclusion.
\end{proof}
\begin{theorem}\label{thm: main}
The space $(\mathcal{E}^p(X, \theta), d_p)$ is a complete geodesic metric space which is the completion of $(\mathcal{H}_{\theta},d_p)$. 
\end{theorem}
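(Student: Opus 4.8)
The plan is to prove the three assertions of Theorem~\ref{thm: main} in the following order: (a) $(\Ec^p(X,\theta),d_p)$ is a metric space (already done in Proposition~\ref{prop: inherit}); (b) $\mathcal H_\theta$ is $d_p$-dense in $\Ec^p(X,\theta)$ and the latter is complete; (c) $(\Ec^p(X,\theta),d_p)$ is a geodesic space. The backbone throughout is the two-sided comparison $\frac1C I_p(u,v)\le d_p^p(u,v)\le C I_p(u,v)$ from Proposition~\ref{prop: inherit}, which transfers questions about $d_p$-convergence to questions about $I_p$-convergence, where the machinery of \cite{Guedj_Lu_Zeriahi_2017_JDG} and \cite{Darvas_DiNezza_Lu_2018_CM} applies.

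For density: by definition of $d_p$ on $\Ec^p(X,\theta)$, any $u\in\Ec^p(X,\theta)$ is the $d_p$-limit of the decreasing sequence $P_\theta(f_j)\in\mathcal H_\theta$, where $f_j\searrow u$ are smooth (such $f_j$ exist by Demailly regularization, and $P_\theta(f_j)\searrow u$); this is exactly the content of the construction before Lemma~\ref{lem: limit Ep exists} together with the last statement of Proposition~\ref{prop: inherit}. For completeness: let $(u_j)$ be $d_p$-Cauchy in $\Ec^p(X,\theta)$. First, Lemma~\ref{control sup} gives a uniform bound $\sup_X u_j\le C$, so (after passing to a subsequence) either $u_j\to u$ in $L^1$ with $u\in\PSH(X,\theta)$, or $u_j\to-\infty$ uniformly; the latter is excluded because $d_p(u_j,u_k)$ bounded forces $I_p(u_j,\phi)$ bounded, hence $\int_X|u_j|\,dV$ bounded. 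Then one shows the $L^1$-limit $u$ lies in $\Ec^p(X,\theta)$ and $d_p(u_j,u)\to 0$. The standard route (as in the K\"ahler case \cite{Darvas_2015_AIM} and in \cite{DiNezza_Guedj_2016_CM}) is: replace $u_j$ by $\psi_j:=P_\theta(u_j,u_{j+1},\dots)=(\inf_{k\ge j}u_k)^*$, an increasing sequence; use the Pythagorean formula (Proposition~\ref{prop: diamond inequality and Pythagorean formula}(i)) and the ``$\max$'' inequality to show $d_p(\psi_j,u_j)\to 0$ and that $(\psi_j)$ is still Cauchy and increasing to some $u$; then monotone increasing $d_p$-convergence (the analogue of the decreasing statement in Proposition~\ref{prop: inherit}, proved via $I_p$ and \cite[Prop.~1.9, Thm.~1.6]{Guedj_Lu_Zeriahi_2017_JDG} applied to an increasing sequence with the help of the comparison inequality) yields $d_p(\psi_j,u)\to0$, hence $d_p(u_j,u)\to0$ and $u\in\Ec^p(X,\theta)$.

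For the geodesic property: given $u_0,u_1\in\Ec^p(X,\theta)$, take $u_{0,j},u_{1,j}\in\mathcal H_\theta$ decreasing to them, let $u_{t,j}$ be the weak Mabuchi geodesic connecting $u_{0,j},u_{1,j}$, and let $u_t$ be the weak Mabuchi geodesic connecting $u_0,u_1$ (which lies in $\Ec^p(X,\theta)$ by \cite[Thm.~2.13]{Darvas_DiNezza_Lu_2018_CM}, since $P(u_0,u_1)\le u_t$). One shows $u_{t,j}\searrow u_t$ for each $t$ (comparison of candidates in the envelope definition), so $d_p(u_{t,j},u_t)\to0$. For the approximants, $t\mapsto u_{t,j}$ is a $d_p$-geodesic: indeed for $\mathcal H_\theta$-endpoints this reduces via the approximation $\omega_\varepsilon$ to the K\"ahler case, where $d_{p,\varepsilon}(u_{s,\varepsilon},u_{t,\varepsilon})=|t-s|\,d_{p,\varepsilon}(u_{0,\varepsilon},u_{1,\varepsilon})$ by \eqref{eq: dp formula} and the Lipschitz estimate, and one passes to the limit $\varepsilon\to0$ using Theorem~\ref{thm: definition of dp} (more precisely its proof, which computes $\int_X|\dot u_0|^p\theta_{u_0}^n$; one needs the analogous computation with endpoints $u_{s,j},u_{t,j}$, or one restricts the geodesic to $[s,t]$). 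Passing $j\to\infty$ gives $d_p(u_s,u_t)=|t-s|\,d_p(u_0,u_1)$, so $u_t$ is the desired $d_p$-geodesic.

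\textbf{The main obstacle} I expect is the completeness step — specifically, establishing $d_p$-convergence of monotone \emph{increasing} sequences in $\Ec^p(X,\theta)$ (the decreasing case is already recorded in Proposition~\ref{prop: inherit}). In the K\"ahler setting this rests on delicate $I_p$-continuity estimates along monotone sequences; here one must check those estimates survive with the singular reference $V_\theta$ in place of $0$, using that $\int_X\theta_u^n=\vol(\theta)$ is constant along $\Ec^p$ so no mass escapes to pluripolar sets, and invoking \cite[Theorem~1.6, Proposition~1.9]{Guedj_Lu_Zeriahi_2017_JDG} together with the non-pluripolar continuity of Monge--Amp\`ere along increasing sequences. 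A secondary technical point is that a $d_p$-Cauchy sequence need not itself be monotone; the trick of passing to $\psi_j=(\inf_{k\ge j}u_k)^*$ requires knowing $\psi_j\in\Ec^p(X,\theta)$ and controlling $d_p(u_j,\psi_j)$, which is where Proposition~\ref{prop: diamond inequality and Pythagorean formula} and the comparison with $I_p$ do the work.
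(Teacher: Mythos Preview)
Your overall architecture---density via decreasing $\mathcal H_\theta$-approximants, completeness via rooftop envelopes of tails, geodesic property via approximation---is exactly the paper's. Two points deserve correction.

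First, the identity $\psi_j=P_\theta(u_j,u_{j+1},\dots)=(\inf_{k\ge j}u_k)^*$ is false: the usc regularization of an infimum of $\theta$-psh functions need not be $\theta$-psh. The correct object is $\psi_j=\lim_{k\to\infty}P_\theta(u_j,\dots,u_{j+k})$, and for the telescoping bound $d_p(u_j,\psi_j)\le\sum_{\ell\ge j}d_p(u_\ell,u_{\ell+1})$ to be finite you must first pass to a subsequence with $d_p(u_j,u_{j+1})\le 2^{-j}$ (you mention ``passing to a subsequence'' only in the $L^1$-compactness context, which is a detour the paper does not take). The paper uses only the Pythagorean identity (Proposition~\ref{prop: diamond inequality and Pythagorean formula}(i)) here, not the $\max$-inequality (ii).

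Second, your geodesic argument has a genuine gap. You propose to show $d_p(u_{s,j},u_{t,j})=|t-s|\,d_p(u_{0,j},u_{1,j})$ for $\mathcal H_\theta$-endpoints by passing to the $\omega_\varepsilon$-level and letting $\varepsilon\to 0$. But Theorem~\ref{thm: definition of dp} and Definition~\ref{def: dp bounded Lap} only compute $\lim_{\varepsilon\to 0}d_{p,\varepsilon}$ when the endpoints are of the form $P_{\omega_\varepsilon}(f)$ with $dd^c f\le C\omega$; the intermediate points $u_{s,j}$, $u_{t,j}$ of the $\theta$-geodesic are \emph{not} in $\mathcal H_\theta$ (they have minimal singularities, but are not envelopes of $C^{1,1}$ obstacles), and neither are the $\omega_\varepsilon$-geodesic points $u_{s,\varepsilon}$ of this form, so the convergence $d_{p,\varepsilon}(u_{s,\varepsilon},u_{t,\varepsilon})\to d_p(u_{s,j},u_{t,j})$ does not follow from anything established. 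Restricting to $[s,t]$ does not help for the same reason. The paper avoids this entirely by invoking Lemma~\ref{lem: distance formula one smooth endpoint}: since $u_{0,j}\in\mathcal H_\theta$ and $u_{t,j}\in\Ec^p(X,\theta)$, and the geodesic from $u_{0,j}$ to $u_{t,j}$ is the reparametrization $w_\ell=u_{t\ell,j}$ with $\dot w_0=t\,\dot u_{0,j}$, one gets directly
\[
d_p^p(u_{0,j},u_{t,j})=\int_X|\dot w_0|^p\theta_{u_{0,j}}^n=t^p\int_X|\dot u_{0,j}|^p\theta_{u_{0,j}}^n=t^p\,d_p^p(u_{0,j},u_{1,j}),
\]
and similarly from the other endpoint. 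This is the missing ingredient in your sketch.
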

\begin{proof}
Let $(\varphi_j)\in \mathcal{E}^p(X, \theta)^{\mathbb{N}}$ be a Cauchy sequence for $d_p$. Extracting and relabelling we can assume that there exists a subsequence $(u_j)\subseteq (\varphi_j)$ such that
$$d_p(u_j, u_{j+1} )\leq 2^{-j}.$$
Define $v_{j,k}:= P_\theta (u_j, \dots, u_{j+k})$ and observe that it is decreasing in $k$. Also, by Proposition \ref{prop: diamond inequality and Pythagorean formula} $(i)$ and the triangle inequality,
$$d_p (u_j, v_{j,k})= d_p (u_j, P_\theta(u_j, v_{j+1,k}))\leq d_p (u_j,v_{j+1,k} ) \leq 2^{-j} + d_p (u_{j+1},v_{j+1,k} ).$$
Hence 
$$d_p (u_j, v_{j,k}) \leq \sum_{\ell=j }^{k-1} 2^{-\ell}\leq 2^{-j+1}.$$
In particular $I_p(u_j, v_{j,k})$ is uniformly bounded from above. We then infer that $v_{j,k}$ decreases to $ v_j\in \PSH(X,\theta)$ as $k\rightarrow +\infty$ and a combination of Proposition \ref{prop: inherit} and \cite[Proposition 1.9]{Guedj_Lu_Zeriahi_2017_JDG} gives
\begin{equation}\label{eq: completeness}
	d_p(u_j,v_j)\leq 2^{1-j}, \ \forall j. 
\end{equation}
Let $\phi$ be the unique $\theta$-psh function with minimal singularities such that $\sup_X \phi=0$ and $\theta_{\phi}^n=dV$. By Lemma \ref{control sup},
\begin{eqnarray*}
|\sup_X v_j| &\leq & C_1+C_2 d_p(v_j, \phi)\leq C_1+C_2 \left(d_p(v_j, u_1)+ d_p(u_1, \phi)\right)\\
&\leq &  C_1+C_2 \left(d_p(v_j, u_j)+ d_p(u_j, u_1)+ d_p(u_1, \phi)\right)\\
&\leq &C_1+ C_2\,(4+d_p(u_1, \phi)).
\end{eqnarray*}
It thus follows that $v_j$ increases a.e. to a $\theta$-psh function $v$. By the triangle inequality we have
$$
d_p(\varphi_j, v)\leq d_p(\varphi_j, u_j)+ d_p(u_j, v_{j})+ d_p(v_j, v).
$$
Since $(\varphi_j)$ is Cauchy,  $ d_p(\varphi_j, u_j)\to 0$. By   \cite[Proposition 1.9]{Guedj_Lu_Zeriahi_2017_JDG} and Proposition \ref{prop: inherit} we have $d_p(v_j,v)\to 0$. These facts together with \eqref{eq: completeness} yield $d_p(\varphi_j,v)\to 0$, hence $(\mathcal{E}^p(X, \theta), d_p)$ is a complete metric space.

Also, any $u\in \Ec^p(X,\theta)$ can be approximated from above by functions $u_j\in \mathcal{H}_{\theta}$ such that $d_p(u_j,u)\to 0$ (Proposition \ref{prop: inherit}). It thus follows that $(\Ec^p(X, \theta),d_p)$ is the metric completion of $\mathcal{H}_{\theta}$. 

Let now  $u_t$ be the Mabuchi geodesic joining $u_0,u_1\in \Ec^p(X,\theta)$. We are going to prove that, for all $t\in [0,1]$, 
\begin{equation*}
d_p(u_t,u_s) =|t-s|d_p(u_0,u_1).
\end{equation*}
We claim that for all $t\in [0,1]$,
\begin{equation}
	\label{eq: geodesic 1}
	d_p(u_0,u_t)= t d_p(u_0,u_1) \ \text{and}\  d_p(u_1,u_t)= (1-t) d_p(u_0,u_1).
\end{equation}
 We first  assume that $u_0,u_1\in \mathcal{H}_{\theta}$. The Mabuchi geodesic joining $u_0$ to $u_t$ is given by $w_{\ell}=  u_{t\ell}$, $\ell\in [0,1]$. Lemma \ref{lem: distance formula one smooth endpoint} thus gives
$$
d_p^p(u_0,u_t) = \int_X |\dot{w}_{0}|^p \theta_{u_0}^n = t^p \int_X |\dot{u}_0|^p\theta_{u_0}^n=t^p d_p^p(u_0,u_1),
$$
proving the first equality in \eqref{eq: geodesic 1}. The second one is proved similarly.   

We next prove the claim for $u_0,u_1\in \mathcal{E}^p(X,\theta)$.  Let $(u_i^j), i=0,1, j\in \mathbb{N}$, be decreasing sequences of functions in $\mathcal{H}_{\theta}$ such that $u_i^j \downarrow u_i$, $i=0,1$. Let $u_t^j$ be the Mabuchi geodesic joining $u_0^j$ and $u_1^j$. Then $u_t^j$ decreases to  $u_t$. By the triangle inequality we have $|d_p(u_0^j, u_t^j)-d_p(u_0, u_t)|\leq d_p(u_0^j, u_0)+d_p(u_t, u_t^j)$. The claim thus follows from Proposition \ref{prop: inherit} and the previous step.

Now, if $0<t<s<1$ then applying twice \eqref{eq: geodesic 1} we get 
$$
d_p(u_t,u_s)= \frac{s-t}{s} d_p(u_0,u_s) = (s-t) d_p(u_0,u_1). 
$$ 
 \end{proof}
 
 We end this section by proving that the distance $d_1$ defined by approximation (see Definition \ref{def: dp bounded Lap})  coincides with the one defined in \cite{Darvas_Dinezza_Lu_2018_L1} using the Monge-Amp\`ere energy.
 \begin{prop}\label{prop: approximation vs MA}
 	Assume $u_0,u_1\in \Ec^1(X,\theta)$. Then 
 	$$
 	d_1(u_0,u_1)=E(u_0)+E(u_1)-2E(P(u_0,u_1)).
 	$$
 \end{prop}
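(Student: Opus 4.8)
The plan is to reduce the claimed identity, using the Pythagorean formula already established for $d_1$ in Proposition~\ref{prop: diamond inequality and Pythagorean formula}(i), to the case where one potential lies below the other, and then to settle that case through the $\omega_\varepsilon$-approximation of this section together with Darvas's energy formula in K\"ahler classes. First, recall from \cite[Theorem 2.13]{Darvas_DiNezza_Lu_2018_CM} that $w:=P(u_0,u_1)\in\Ec^1(X,\theta)$, so the right-hand side is meaningful and finite. Since $w\le u_0$ and $w\le u_1$, and since $P(u,v)=v$ whenever $v\le u$, Proposition~\ref{prop: diamond inequality and Pythagorean formula}(i) gives
$$
d_1(u_0,u_1)=d_1(u_0,w)+d_1(u_1,w).
$$
Hence it suffices to prove the \emph{claim}: if $v\le u$ in $\Ec^1(X,\theta)$ then $d_1(u,v)=E(u)-E(v)$. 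Granting this, $d_1(u_0,u_1)=\bigl(E(u_0)-E(w)\bigr)+\bigl(E(u_1)-E(w)\bigr)$, which is the assertion.

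\textbf{The claim when $u,v\in\mathcal{H}_\theta$, $v\le u$.} Write $u=P_\theta(f)$, $v=P_\theta(g)$ with $dd^cf,dd^cg\le C\omega$. Replacing $g$ by $\min(f,g)$ — which still satisfies $dd^c\min(f,g)\le C\omega$ by Lemma~\ref{lem: min}, and whose $\theta$-envelope is still $P(u,v)=v$ because $v\le u$ — I may assume $g\le f$. Then with $u_\varepsilon=P_{\omega_\varepsilon}(f)$ and $v_\varepsilon=P_{\omega_\varepsilon}(g)$ one has $v_\varepsilon\le u_\varepsilon$, so $P_{\omega_\varepsilon}(u_\varepsilon,v_\varepsilon)=v_\varepsilon$; as $\{\omega_\varepsilon\}$ is K\"ahler, Darvas's formula \cite{Darvas_2015_AIM,Darvas_2017_AJM} together with \eqref{distance form} and the cohomological invariance of energy differences gives
$$
d_{1,\varepsilon}(u_\varepsilon,v_\varepsilon)=\frac{1}{V_\varepsilon}\bigl(E_{\omega_\varepsilon}(u_\varepsilon)-E_{\omega_\varepsilon}(v_\varepsilon)\bigr),
$$
where $E_{\omega_\varepsilon}$ is the (unnormalized) Monge--Amp\`ere energy of $\{\omega_\varepsilon\}$. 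Letting $\varepsilon\to0$, the left side converges to $d_1(u,v)$ by Definition~\ref{def: dp bounded Lap}, and $V_\varepsilon\downarrow 1=\vol(\theta)$, so this case reduces to proving $E_{\omega_\varepsilon}(u_\varepsilon)-E_{\omega_\varepsilon}(v_\varepsilon)\to E_\theta(u)-E_\theta(v)$.

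\textbf{The main obstacle} is precisely this energy convergence. I would write the difference through the cocycle identity as
$$
E_{\omega_\varepsilon}(u_\varepsilon)-E_{\omega_\varepsilon}(v_\varepsilon)=\frac{1}{n+1}\sum_{j=0}^{n}\int_X(u_\varepsilon-v_\varepsilon)\,(\omega_\varepsilon+dd^cu_\varepsilon)^{j}\wedge(\omega_\varepsilon+dd^cv_\varepsilon)^{n-j},
$$
observe that $u_\varepsilon-v_\varepsilon$ decreases to $u-v$ and is bounded by $\sup_X|f-g|$, and then show that the mixed Monge--Amp\`ere products converge weakly to $\theta_u^{j}\wedge\theta_v^{n-j}$. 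On $\Amp(\theta)$ the potentials $u_\varepsilon,v_\varepsilon$ are locally uniformly bounded (they have minimal singularities and $V_\theta$ is locally bounded there) and have locally bounded Laplacian by Theorem~\ref{thm: regularity of psh envelope}; since $\omega_\varepsilon\to\theta$ smoothly, Bedford--Taylor continuity along decreasing sequences of locally bounded plurisubharmonic functions applies locally on $\Amp(\theta)$. Moreover each mixed product has total mass $V_\varepsilon\to 1=\int_X\theta_u^{j}\wedge\theta_v^{n-j}$ (mixed products of potentials with minimal singularities carry the cohomological mass), so no mass escapes to the pluripolar set $X\setminus\Amp(\theta)$ and the integrals of the uniformly bounded, quasi-continuous integrands $u_\varepsilon-v_\varepsilon$ pass to the limit. (This is essentially the convergence analysis of \cite{DiNezza_Guedj_2016_CM}, whose strategy this section follows.) This yields $d_1(u,v)=E(u)-E(v)$ for $u,v\in\mathcal{H}_\theta$ with $v\le u$.

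\textbf{The general case} follows by two monotone approximations. If $u\in\mathcal{H}_\theta$, $v\in\Ec^1(X,\theta)$ and $v\le u$, pick $v^j\in\mathcal{H}_\theta$ with $v^j\downarrow v$ and replace $v^j$ by $P(v^j,u)\in\mathcal{H}_\theta$ (Lemma~\ref{lem: min}), so that $v^j\le u$ and still $v^j\downarrow v$; applying the previous step and letting $j\to\infty$, using Proposition~\ref{prop: inherit} for $d_1(u,v^j)\to d_1(u,v)$ and continuity of $E$ along decreasing sequences in $\Ec^1(X,\theta)$ (\cite{Boucksom_Eyssidieux_Guedj_Zeriahi_2010_AM,Darvas_DiNezza_Lu_2018_CM}), gives $d_1(u,v)=E(u)-E(v)$. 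Finally, for arbitrary $u,v\in\Ec^1(X,\theta)$ with $v\le u$, pick $u^k\in\mathcal{H}_\theta$ with $u^k\downarrow u$, note $v\le u^k$, apply the case just proved, and pass to the limit in the same way. Together with the reduction above this proves the proposition.
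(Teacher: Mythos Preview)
Your argument is correct in outline, but it takes a genuinely different route from the paper's proof; one small slip should also be noted.

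\textbf{Comparison.} Both proofs make the same reduction via the Pythagorean formula to the ordered case $v\le u$, and both finish with the same monotone approximation from $\mathcal{H}_\theta$ to $\Ec^1(X,\theta)$. The essential difference is how the ordered case $u,v\in\mathcal{H}_\theta$, $v\le u$, is handled. You push the identity back to the K\"ahler classes $\{\omega_\varepsilon\}$, invoke Darvas's formula $d_{1,\varepsilon}=\frac{1}{V_\varepsilon}(E_{\omega_\varepsilon}(u_\varepsilon)-E_{\omega_\varepsilon}(v_\varepsilon))$, and then prove the convergence of the energy difference by analyzing the mixed Monge--Amp\`ere products $(\omega_\varepsilon+dd^cu_\varepsilon)^j\wedge(\omega_\varepsilon+dd^cv_\varepsilon)^{n-j}$ via Bedford--Taylor continuity on $\Amp(\theta)$ together with mass control. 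The paper instead stays entirely in the class $\{\theta\}$: it uses that $t\mapsto E(u_t)$ is affine along the Mabuchi geodesic (\cite[Theorem 3.12]{Darvas_DiNezza_Lu_2018_CM}) and that $E$ is concave along affine curves, which squeezes $E(u_1)-E(u_0)$ between $\int_X\dot u_0\,\theta_{u_0}^n$ and $\int_X\dot u_1\,\theta_{u_1}^n$; both extremes equal $d_1(u_0,u_1)$ by Theorem~\ref{thm: definition of dp}. The paper's route is shorter because the geodesic linearity of $E$ is already available, so no new convergence analysis is needed; your route is self-contained with respect to the $\varepsilon$-approximation philosophy of the section, at the cost of proving a mixed-product convergence that is not covered by Lemmas~\ref{lem: Ip convergence}--\ref{lem: density converge} (those treat only pure products via the contact-set description).

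\textbf{Minor correction.} The sequence $u_\varepsilon-v_\varepsilon$ need not be monotone in $\varepsilon$ (both $u_\varepsilon$ and $v_\varepsilon$ decrease), so ``decreases to $u-v$'' is not right. What you actually have, and what suffices, is pointwise convergence together with the uniform bound $|u_\varepsilon-v_\varepsilon|\le\sup_X|f-g|$; since $u_\varepsilon\downarrow u$ and $v_\varepsilon\downarrow v$ are continuous on $\Amp(\theta)$, Dini gives locally uniform convergence of $u_\varepsilon-v_\varepsilon$ there, which is what one uses to pass the integrals to the limit against the weakly convergent mixed products.
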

 Here the Monge-Amp\`ere energy $E$ is defined as
 $$
 E(u) := \frac{1}{n+1} \sum_{j=0}^n \int_X (u-V_{\theta}) \theta_u^{j} \wedge \theta_{V_{\theta}}^{n-j}. 
 $$
 \begin{proof}
 We first assume that $u_0,u_1 \in \mathcal{H}_{\theta}$ and $u_0\leq u_1$. Let $[0,1] \ni t\mapsto u_t$ be the Mabuchi geodesic joining $u_0$ and $u_1$. By \cite[Theorem 3.12]{Darvas_DiNezza_Lu_2018_CM}, $t\mapsto E(u_t)$ is affine, hence for all $t\in [0,1]$,
 $$
\frac{ E(u_t)-E(u_0)}{t} = E(u_1)-E(u_0)  = \frac{E(u_1)-E(u_t)}{1-t}.
 $$
 Since $E$ is concave along affine curves (see \cite{Berman_Boucksom_Guedj_Zeriahi_2013_IHES}, \cite{Boucksom_Eyssidieux_Guedj_Zeriahi_2010_AM}, \cite[Theorem 2.1]{Darvas_Dinezza_Lu_2018_L1}) we thus have 
 $$
 \int_X \frac{u_t-u_0}{t} \theta_{u_0}^n \geq   E(u_1)-E(u_0) \geq \int_X \frac{u_1-u_t}{1-t} \theta_{u_1}^n.
 $$
 Letting $t\to 0$ in the first inequality and $t\to 1$ in the second one we obtain
 $$
 \int_X \dot{u}_0 \theta_{u_0}^n \geq E(u_1)-E(u_0) \geq \int_X \dot{u}_1 \theta_{u_1}^n. 
 $$
 By Theorem \ref{thm: definition of dp} we then have $$ d_1(u_0-u_1)= \int_X \dot{u}_0 \theta_{u_0}^n= \int_X \dot{u}_1 \theta_{u_1}^n= E(u_1)-E(u_0).$$

We next assume that $u_0,u_1\in \mathcal{H}_{\theta}$ but we remove the assumption that $u_0\leq u_1$. By Lemma \ref{lem: min}, $P(u_0,u_1) \in \mathcal{H}_{\theta}$.  By the Pythagorean formula (Proposition \ref{prop: diamond inequality and Pythagorean formula}) and the first step we have 
\begin{flalign*}
d_1(u_0,u_1) &=d_1(u_0,P(u_0,u_1)) + d_1(u_1,P(u_0,u_1)) \\
	& = E(u_0)-E(P(u_0,u_1)) + E(u_1)-E(P(u_0,u_1)).
\end{flalign*}

We now treat the general case. Let $(u_i^j), i=0,1, j\in \mathbb{N}$ be decreasing sequences of functions in $\mathcal{H}_{\theta}$ such that $u_i^j \downarrow u_i$, $i=0,1$. Then $P(u_0^j,u_1^j) \downarrow P(u_0,u_1)$. By \cite[Proposition 2.4]{Darvas_Dinezza_Lu_2018_L1}, $E(u_i^j) \to E(u_i)$, for $i=0,1$ and $E(P(u_0^j,u_1^j)) \to E(P(u_0,u_1))$ as $j\rightarrow +\infty$. The result thus follows from Proposition \ref{prop: inherit}, the triangle inequality and the previous step.
 \end{proof}



\end{document}